\documentclass{article}
\usepackage{graphicx} 
\usepackage{subfigure}
\usepackage{amsmath}
\usepackage{amsfonts}
\usepackage{amsthm}
\usepackage{mathrsfs}
\usepackage{amssymb}
\usepackage {color}
\usepackage{indentfirst}
\usepackage{float}
\graphicspath{{images/}} 
\usepackage{authblk}

\usepackage[all]{xy}
\usepackage{hyperref}
\hypersetup{hypertex=true,colorlinks=true,linkcolor=blue, anchorcolor=blue,citecolor=blue}
\usepackage[linkcolor=red,anchorcolor=blue, colorlinks=red]{adjustbox} 

\title{Perron's method and spherical ideal circle patterns with prescribed total geodesic curvatures}

\author{Lishan Li, Jun Hu, Yi Qi, Yu Sun}

\date{}

\begin{document}

\maketitle

\newtheorem{definition}{Definition}[section]
\newtheorem{theorem}[definition]{Theorem}
\newtheorem{proposition}[definition]{Proposition}
\newtheorem{lemma}[definition]{Lemma}
\newtheorem{remark}[definition]{Remark}
\newtheorem{corollary}[definition]{Corollary}

\begin{abstract}
In this paper, we apply the classical Perron method to give a proof of the existence and uniqueness/rigidity result of a circle pattern on a closed surface equipped with conical spherical metric when prescribed measures of the angles of intersecting circles stay in the range $(0, \frac{\pi}{2}]$ and total geodesic curvatures are assigned to the circles, which is recently obtained in \cite{MR4683863} via Colin de Verdi\`ere’s variation method. Then we show the convergence of Thurston's algorithm, which adjusts the geodesic curvatures of circles one by one based on the prescribed values for total geodesic curvatures of the circles, to the desired circle pattern in the setting of the result. 
\end{abstract}
\maketitle
\section{Introduction}
Let $S$ be a closed oriented surface equipped with a spherical, Euclidean or hyperbolic metric (possibly with singularities). By a {\em Delaunay circle pattern} on $S$ we mean a finite collection of open disks $\mathcal{D}=\{D_1,D_2.\cdots,D_n\}$ on $S$ satisfying the following two conditions:
\begin{itemize}
    \item  The complement of $\cup_{D\in \mathcal{D}} D$ in $S$ consists of finitely many points.

    \item  No disk of $\mathcal{D}$ has its boundary contained in the union of two other disks. 
\end{itemize}
One associates to $\mathcal{D}$ a weighted graph $G_{\mathcal{D}}$ on $S$ by defining its vertex set $V_{\mathcal{D}}$, edge set $E_{\mathcal{D}}$ and face set $F_{\mathcal{D}}$ as follows:
\begin{itemize}
    \item $V_{\mathcal{D}}=S\setminus \cup_{D\in \mathcal{D}} D$;

    \item Each face $f$ of $F_{\mathcal{D}}$ corresponds to a disk $D_f\in \mathcal{D}$;

    \item Each edge $e\in E_{\mathcal{D}}$ corresponds to a bigon where two disks of $\mathcal{D}$ intersect, and is weighted by the interior angle $\theta_e$ ($\in (0, \pi)$) of the bigon. 
\end{itemize}
Assume that the interior angles of all bigons related to the edges have measures in $(0, \frac{\pi}{2}]$. Then the bigons are disjoint from each other and none of them contains any singularities if there exist. Such circle patterns are called {\em ideal circle patterns}. 
Figure \ref{fig-2} illustrates a non-ideal circle pattern, which is determined by the graph on the right; Figure \ref{fig-5} shows an ideal circle pattern, which is determined by the graph on the right with weights on the edges in the range $(0, \frac{\pi}{2}]$.
\begin{figure}[H]
{
\centering
\includegraphics[height=4cm]{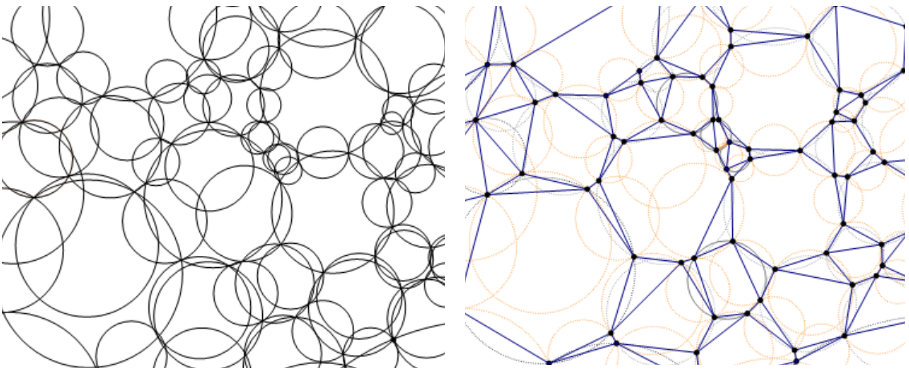} 

\caption{A local picture of a Delaunay circle pattern and its graph.}
\label{fig-2}
}
\end{figure}
\begin{figure}[H]
{
\centering
\includegraphics[height=5cm]{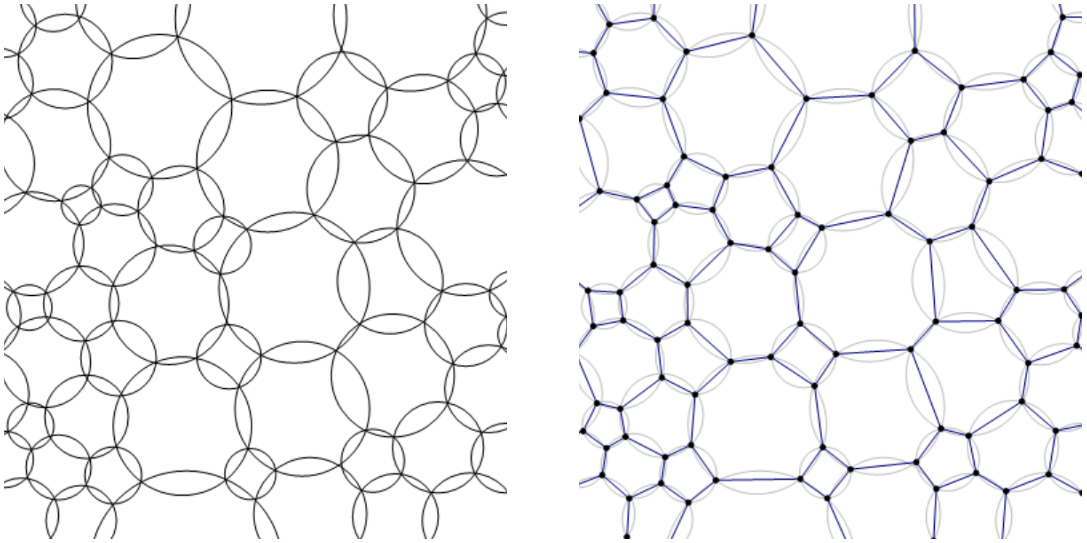} 

\caption{A local picture of an ideal circle pattern and its graph.}
\label{fig-5}
}
\end{figure}
The cone angle $\alpha_v$ of $(S, \mathcal{D})$ at a vertex $v\in V_{\mathcal{D}}$ is determined by the weights $\theta_e$ as 
\begin{equation}\label{sing at a vertex}
\alpha_v=\sum _{e: \text{ edge from }v} (\pi - \theta_e),
\end{equation}
and the cone angle $\alpha_f$ at the center of a face $D_f$ satisfies the following condition
\begin{equation}\label{condi on sings of faces}
\sum_{f\in F'}\alpha_f < \sum_{e\in E(F^{\prime})} 2\theta_e \;\text{ for any } \;F^{\prime}\subset F\text{ and }F^{\prime}\neq\emptyset,
\end{equation} 
where $E(F^{\prime})=\{e\in E| \;e \text{ is an edge of a face }f\in F^{\prime}\}.$
    
Using Colin de Verdi`ere’s variation method (\cite{MR1106755}), Bobenko and Springborn (\cite{MR2022715}) showed that there exists a unique hyperbolic $(S, \mathcal{D})$ realizing a prescribed weighted graph and with prescribed cone angles, as long as the prescription satisfies the above condition (\ref{condi on sings of faces}).
\begin{theorem}[Bobenko-Springborn]\label{Rigidity pattern}
     Let $S$ be a closed topological oriented surface and $G$ be a graph on $S$ with the sets of edges and faces denoted by $E$ and $F$, such that each $e \in E$ is weighted by some $\theta_e \in\left(0, \frac{\pi}{2}\right]$ and every $f \in F$ is simply connected. Then $\langle\alpha_f\rangle_{f \in F} \in \mathbb{R}_{+}^{F}$ satisfies (\ref{condi on sings of faces}) if and only if there exists a hyperbolic metric $\sigma$ on $S$ with conical singularities, along with a circle pattern $\mathcal{D}$ on ($S, \sigma$), such that $G_{\mathcal{D}}$ coincides with $G$ (up to isotopy) as weighted graphs and $\sigma$ has cone angle $\alpha_f$ at the center of the disk $D_f \in \mathcal{D}$ for each $f \in F$. Moreover, $(S, \mathcal{D})$ is unique up to isotopy in the sense that if there are two such circle patterns
      $(S, \mathcal{D}_1)$ and  $(S, \mathcal{D}_2)$ then there exists an isometry $\phi: (S, \mathcal{D}_1) \rightarrow (S, \mathcal{D}_2)$ homotopic to the identity map. 
\end{theorem}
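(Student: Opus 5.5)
The plan is to prove Theorem \ref{Rigidity pattern} by Perron's method, the same strategy we later use in the spherical setting, organized around the radii of the circles. Triangulate $S$ by joining the center of each face $f$ to the vertices of $G$ on $\partial f$. Given radii $r=(r_f)_{f\in F}\in\mathbb{R}_+^F$, each edge $e$ with adjacent faces $f,g$ determines a hyperbolic kite: it consists of two congruent triangles with sides $r_f$, $r_g$ meeting at the angle $\pi-\theta_e$. The side opposite that angle is the common chord of the two circles. Let $\phi_{f,e}(r)$ be the angle of this kite at the center of $D_f$. Gluing the kites produces a cone metric on $S$ with the correct combinatorics and weights. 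The cone angle at each vertex is automatically $\sum(\pi-\theta_e)$, as in (\ref{sing at a vertex}), and the cone angle at the center of $f$ is $K_f(r)=\sum_{e\in\partial f}\phi_{f,e}(r)$. Since $\theta_e\le\pi/2$, the kites fit without overlap and the bigons are disjoint. So both existence and uniqueness reduce to solving $K_f(r)=\alpha_f$ for all $f$.

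First I record the elementary monotonicity facts from hyperbolic trigonometry. The angle $\phi_{f,e}$ is strictly decreasing in $r_f$ and strictly increasing in $r_g$. As $r_f\to0$ it tends to $\pi-(\pi-\theta_e)=\theta_e\cdot$(appropriate normalization); more precisely, for fixed $r_g$ the angle sum of the kite tends to the Euclidean value, and $2\phi_{f,e}\to 2\theta_e$ when $r_g$ is bounded away from $0$. As $r_f\to\infty$ we get $\phi_{f,e}\to0$. Finally, $\sum_f K_f$ is controlled by area via Gauss--Bonnet.

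\emph{Uniqueness.} Take two solutions $r,r'$ and let $F'=\{f: r_f>r'_f\}$. Summing $K_f$ over $F'$, the angles along interior edges of $F'$ combine to a quantity that is strictly decreasing when all radii in $F'$ scale up. This uses $\partial(\phi_{f,e}+\phi_{g,e})/\partial r$ together with Gauss--Bonnet, since area grows. The angles along boundary edges decrease as well. Hence $\sum_{F'}K_f(r)<\sum_{F'}K_f(r')$, which contradicts equality. The same argument run with $r'_f>r_f$ shows $r=r'$, and equal radii give isometric patterns.

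\emph{Existence.} Define the Perron family $\mathcal{P}=\{r: K_f(r)\ge\alpha_f \ \forall f\}$ (subsolutions) and set $r^*_f=\sup_{r\in\mathcal{P}}r_f$. The steps, in order, are:
\begin{itemize}
\item[(i)] $\mathcal{P}\neq\emptyset$. Taking all radii uniformly tiny gives $K_f\approx\sum_{e\in\partial f}2\theta_e\cdot(\text{const})$, which lies above $\alpha_f$ by (\ref{condi on sings of faces}) with $F'=\{f\}$. In the uniform small limit $\phi_{f,e}\to$ the Euclidean angle, so I will choose the radii so that this estimate holds.
\item[(ii)] $\mathcal{P}$ is closed under componentwise maximum, by the monotonicity of $\phi$ in neighbors' radii.
\item[(iii)] $r^*$ is finite. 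This is where (\ref{condi on sings of faces}) is used in full generality. If a subset $F'$ of radii tends to infinity while the others stay bounded, then $\sum_{F'}K_f\to\sum_{e\in E(F')}2\theta_e$-type limits can fail the inequality. I will check the precise limit: the interior-edge angles go to $0$ and the boundary angles tend to at most the Gauss--Bonnet defect. This forces $\sum_{F'}\alpha_f\le\sum_{F'}K_f\to$ something contradicting (\ref{condi on sings of faces}). Positivity of $r^*$ follows from (i).
\item[(iv)] $r^*$ is a solution. If $K_f(r^*)>\alpha_f$, increase $r_f$ slightly to stay in $\mathcal{P}$, which contradicts maximality. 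The equality $K_f\ge\alpha_f$ at $r^*$ follows from continuity and the lattice property.
\end{itemize}

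The main obstacle is step (iii), together with the asymptotics needed in (i). Getting the limiting angle sums exactly right as some radii degenerate requires a careful analysis of the hyperbolic kite when one radius goes to $\infty$ or $0$, and that is what converts (\ref{condi on sings of faces}) into a uniform bound. For the converse (necessity of (\ref{condi on sings of faces})), I will apply Gauss--Bonnet to the union of the kites belonging to $F'$. The area is positive, so $\sum_{F'}\alpha_f<\sum_{E(F')}2\theta_e$ once the angle contributions at vertices are tallied.
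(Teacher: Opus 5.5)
\textbf{There is a genuine gap, and it sits in step (i), which is the crux of your argument.} The paper does not prove this theorem; it cites Bobenko--Springborn's variational proof. So the relevant benchmark is its spherical Perron argument.

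\emph{Why step (i) fails.} By the hyperbolic four-part formula, $\cot\frac{\phi_{f,e}}{2}=\frac{\sinh r_f\coth r_g+\cosh r_f\cos\theta_e}{\sin\theta_e}$. For $r_f=r_g\to0$ this tends to $\cot\frac{\theta_e}{2}$. Hence uniformly tiny radii give $\phi_{f,e}\to\theta_e$ and $K_f\to\sum_{e\in\partial f}\theta_e$. For a face not adjacent to itself, this is only half of the bound $\sum_{e\in E(\{f\})}2\theta_e$ that (\ref{condi on sings of faces}) with $F'=\{f\}$ provides. Take one face with $\alpha_f$ close to that bound and all other cone angles tiny: (\ref{condi on sings of faces}) holds, but your choice is not in $\mathcal{P}$.

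\emph{Why no cleverer choice based on singleton inequalities can work.} In your set-up, step (iii) needs no hypothesis at all. Since $\cos\theta_e\ge0$ and $\coth r_g>1$, we have $\cot\frac{\phi_{f,e}}{2}\ge\sinh r_f/\sin\theta_e$. So $K_f\to0$ as $r_f\to\infty$ whatever the neighboring radii do; the boundary angles also go to $0$, not to a ``Gauss--Bonnet defect''. Then $K_f\ge\alpha_f>0$ alone bounds $r_f$ uniformly. Steps (ii) and (iv) use only monotonicity and continuity. Consequently $\mathcal{P}\neq\emptyset$ is equivalent to solvability, and so it must encode (\ref{condi on sings of faces}) for every $F'$. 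This is exactly the ``more effort'' the paper flags for its superpattern set $\mathcal{K}_2$. Your $\mathcal{P}$ is the analogue of $\mathcal{K}_2$, since $T_{f,e}$ and $\phi_{f,e}$ have the same monotonicity in the radii.

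\textbf{How to repair it.} Run Perron on the other side, as the paper does with $\mathcal{K}_1$. Let $\mathcal{Q}=\{r: K_f(r)\le\alpha_f\ \forall f\}$. It is nonempty (take all radii large) and closed under componentwise minimum by the same monotonicity. Set $r_*=\inf\mathcal{Q}$.
\begin{itemize}
\item Condition (\ref{condi on sings of faces}) is now used in full, precisely to show $r_*>0$. Suppose that along a sequence in $\mathcal{Q}$ the radii of $F'$ tend to $0$ while the others stay bounded below. On edges with both sides in $F'$, $\phi_{f,e}+\phi_{g,e}=2\theta_e-\mathrm{Area}(\text{kite})\to2\theta_e$. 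On edges with exactly one side $f\in F'$, $\phi_{f,e}\to2\theta_e$. This gives $\sum_{F'}\alpha_f\ge\sum_{E(F')}2\theta_e$, contradicting (\ref{condi on sings of faces}).
\item If $K_f(r_*)<\alpha_f$ for some $f$, shrink $r_f$ slightly to contradict minimality.
\end{itemize}

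\textbf{What is fine.} Your uniqueness argument, comparing $\sum_{F'}K_f$ over $F'=\{f:r_f>r'_f\}$ via kite-area monotonicity, is correct as written, and so is the necessity direction.

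\textbf{Two smaller slips.} The side opposite the angle $\pi-\theta_e$ joins the two centers; the common chord is the other diagonal of the kite. And as $r_f\to0$ with $r_g$ bounded below, $\phi_{f,e}\to2\theta_e$, not $\theta_e$.
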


It is pointed out in \cite{MR4683863} that a similar result of Theorem \ref{Rigidity pattern}
holds for a closed oriented surface equipped with a Euclidean metric (with possible singularities at vertices or centers of faces), which may be viewed as a discrete version of Theorem B in \cite{Troyanov1991}. But the uniqueness in this case is up to a global rescaling in the sense that if there are two such circle patterns
      $(S, \mathcal{D}_1)$ and  $(S, \mathcal{D}_2)$ then there exists a homeomorphism $\phi: (S, \mathcal{D}_1) \rightarrow (S, \mathcal{D}_2)$ homotopic to the identity map and 
      $$d_2(\phi(x), \phi(y))=cd_1(x, y)$$
      for a constant $c>0$ and any two points $x$ and $y$ on $S$. It is a common guidance in the field that any method used to show Theorem \ref{Rigidity pattern} (in the hyperbolic case) can be modified to prove the analogue in the Euclidean case.

However, a direct analogue of Theorem \ref{Rigidity pattern} fails on a closed oriented surface with a conical spherical metric, since M\"obius transformations on the sphere preserve angles but distort the metric. To obtain a similar result of rigidity up to isotopy, Nie (\cite{MR4683863}) uses an identical condition of (\ref{condi on sings of faces}) except to replace $\alpha_f$ by the total geodesic curvature of the circle corresponding to each face $f$, and applies Colin de Verdi`ere’s variation method (\cite{MR1106755}) to achieve the existence and uniqueness (up to isotopy). 

In this paper, we first apply the classical Perron method to give an alternative proof of Nie's result. Then we show the convergence of Thurston's algorithm (\cite{Thurston1979TheGA}, pages 13.44ff.), which adjusts the geodesic curvatures of circles one by one based on the prescribed values for total geodesic curvatures of the circles, to the desired circle pattern in the setting of Nie's result. 

Let us also point out that the classical Perron method can be used to develop alternative proofs of Theorem \ref{Rigidity pattern} and its analogue in the Euclidean case. 

In the remainder of this introduction section, we introduce some necessary background and give precise statements of the results which we prove in this paper.

\subsection{Closed surface with conic spherical metric}
Assume that $\mathbb{S}^2$ stands for the unit sphere in $\mathbb{R}^3$ centered at the oirgin. 
Let $\mathbb{S}^2_\alpha$ be obtained by gluing the two sides of a spherical lune on $\mathbb{S}^2$ of an angle $\alpha$, and let $D_\alpha(r)$ be the open metric disk of radius $r$ on $\mathbb{S}_\alpha^2$ centered at the north pole $A\in \mathbb{S}_\alpha^2$, where $r\in (0, \pi)$, which we call {\em a conical spherical disk} of angle $\alpha $ and radius $r$. Note that $r$ may be taken in the range $(0, \pi)$, {\em but in this paper, we restrict $r\in (0, \frac{\pi}{2})$.} On Figure \ref{fig-8}, $D_{\alpha_1}(r_1)$ shows an example of $D_\alpha(r)$.
\begin{figure}[H]
{
\centering
\includegraphics[height=5cm]{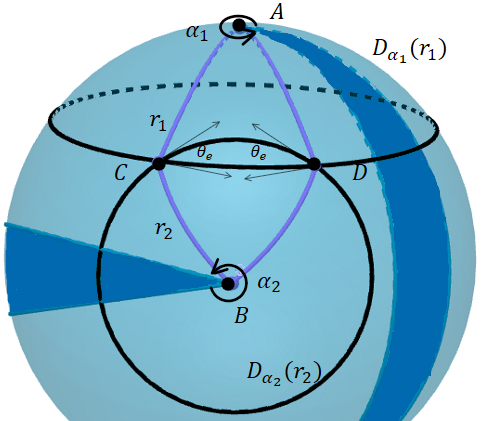} 

\caption{A conical spherical disk.}
\label{fig-8}
}
\end{figure}

More generally, for any $\alpha\in(0, +\infty)$ and any $r\in (0, \pi)$, a \textit{conical spherical disk} $D_{\alpha}(r)$ of angle $\alpha $ and radius $r$ can be defined in spherical background geometry, which is often called a {\em cone} of angle $\alpha $ and radius $r$, the center is called the {\em conical point} and $\alpha$ is called the {\em cone angle}. In this paper, keep in mind that the radius \( r \) of the conical disk lies in the interval \( (0, \frac{\pi}{2}) \). We may briefly sketch a conical spherical disk as the one in Figure \ref{fig-3}. 

\begin{figure}[H]
{
\centering
\includegraphics[height=3cm]{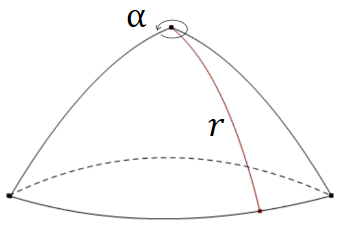} 

\caption{A conical spherical disk $D_{\alpha}(r)$.}
\label{fig-3}
}
\end{figure}

The {\em discrete Gaussian curvature} of $D_\alpha(r)$ at the conic point  is defined as $$K=2\pi-\alpha.$$ 
Denote by $k$ and $\ell$ the geodesic curvature and the circumference of the boundary curve $\partial  D_{\alpha}(r)$ of $D_{\alpha}(r)$, and denote by $Area(D_{\alpha}(r))$ the area of $D_{\alpha}(r)$. Then 
\begin{equation}\label{equ-2}
k=\cot r,\quad \ell=\alpha\sin r,\quad Area(D_{\alpha}(r))=2\alpha\sin^2\frac{r}{2}.
\end{equation}
From the Gauss-Bonnet formula, 
\begin{equation}\label{equ-1}
Area(D_{\alpha}(r))+k\cdot \ell+K=2\pi\chi(D_{\alpha}(r)).
\end{equation}
The total {\em geodesic curvature} of $\partial  D_{\alpha}(r)$ is defined as $$T=k\cdot \ell=\alpha\cos r. $$

Let $S$ be a closed oriented surface. We say that $S$ is equipped with a {\em conical spherical metric} if there is a collection $\{\phi_i: D_{\alpha_i}(r_i)\rightarrow S\}_{i\in I}$ of coordinate charts such that 

\begin{itemize}
    \item $\{\phi_i(D_{\alpha_i}(r_i))\}_{i\in I}$ is a covering of $S$ except finitely many points, and 

    \item  for any $i, j\in I$ with $i\neq j$, if $\phi_i(D_{\alpha_i}(r_i))\cap \phi_j(D_{\alpha_j}(r_j))\neq \emptyset$, then the transition map 
$$ \phi_j\circ (\phi_i)^{-1}: \phi_i(D_{\alpha_i}(r_i))\rightarrow \phi_j(D_{\alpha_j}(r_j))$$
is an isometry in the spherical metric. 

\end{itemize}

\subsection{Existence and uniqueness of ideal circle pattern on a closed surface equipped with conical spherical metric}
Let $S$ be a closed oriented surface equipped with a conical spherical metric $\sigma$ and $\{x_1,\cdots,x_n\}$ be a set of finitely many points on $S$, and let $\mathcal{D}=\{D_1,D_2,\cdots,D_m\}$ be a collection of finitely many conical spherical disks. A local isometry $\iota:\sqcup_{i=1}^m D_i\rightarrow S$ from the disjoint union of the disks in $\mathcal{D}$ to $S$ is called a \textit{spherical ideal circle pattern} on $S$ if the following conditions are satisfied:

(1) $\iota (\sqcup_{i=1}^m D_i)=S\backslash\{x_1,\cdots,x_n\} $;

(2) $\iota $ maps at most two to one;

(3) each $\iota(D_i)$ is not contained in any other $\iota(D_j)$.


Let $G_{\mathcal{D}}=\{V_{\mathcal{D}},E_{\mathcal{D}},F_{\mathcal{D}}\}$ be the graph determined by an ideal spherical circle pattern $\mathcal{D}$ on a closed oriented surface $S$ (a local picture of $G_{\mathcal{D}}$ is illustrated on Figure \ref{fig-5}). The graph embedded on the surface $S$ gives a polyhedral decomposition of $S$. There is a bigon $\Omega_e$ corresponding to each edge $e\in E$ and we define the angle of the bigon as the weight of the edge. Moreover, this ideal spherical circle pattern $\mathcal{D}$ determines the conical spherical metric $\sigma $ on $S$, with possible singularities at some vertices and some centers of the disks, so that the map $\iota$ is a local isometry on each disk $D_i$. More precisely, the following theorem is obtained in \cite{MR4683863}, which we give a different proof in this paper.

\begin{theorem}[\cite{MR4683863}]\label{Rigidity theorem} Let $S$ be a closed oriented surface and $G$ be a graph on $S$ with the sets of edges and faces denoted by $E$ and $F$. Assume that the weights on the edges of $E$ are given by a map $\Theta: E\rightarrow (0, \pi/2]$. Let $\hat{{\bf{T}}}: F\rightarrow (0, +\infty)$ be a map defined on the face set $F$. The following two statements are equivalent:

(i) The map $\hat{\bf T}\in\mathcal{T}$, where\begin{equation}\label{equ-3}\mathcal{T}=\left\{\begin{array}{l|l}{\bf{T}}: F\rightarrow (0, +\infty)\end{array}\text{ }\sum_{f\in F^{\prime}}{\bf{T}}(f)<\sum_{e\in E(F^{\prime})} 2\Theta(e), \forall \;F^{\prime}\subset F\text{ and }F^{\prime}\neq\emptyset\right\}\end{equation} and $E(F^{\prime})=\{e\in E| \;e \text{ is an edge of an face }f\in F^{\prime}\}.$

(ii) There exists a unique spherical metric $\sigma$ with conical singularities on $S$ along with an ideal spherical circle pattern $\hat{\mathcal{D}}$ such that the corresponding graph $G_{\mathcal{D}}$ coincides with $G$ up to isotopy and the total geodesic curvature of the boundary circle of the conical disk $D_f$ is equal to $\hat{{\bf{T}}}(f)$ for every $f\in F$. 
\end{theorem}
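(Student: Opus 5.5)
We will prove Theorem \ref{Rigidity theorem} with a Perron-type argument on the radii, or equivalently on the geodesic curvatures $k_f=\cot r_f$ of the circles. The first step is local. Fix a face $f$ and radii $r=(r_g)_{g\in F}\in(0,\frac{\pi}{2})^F$. Each edge $e=fg$ with weight $\Theta(e)$ gives a spherical triangle built from the two centers and an intersection point of the circles. Its angle at the center of $D_f$, call it $\beta_{e}(r_f,r_g)$, is given by spherical trigonometry. The cone angle at the center of $f$ is $\alpha_f=\sum_{e\ni f}2\beta_e(r_f,r_g)$, so the total geodesic curvature is
\begin{equation*}
T_f(r)=\cos r_f\sum_{e\ni f}2\beta_e(r_f,r_g).
\end{equation*}
We then record three monotonicity facts. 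First, $\beta_e$ is increasing in $r_g$. Second, $T_f$ is strictly decreasing in $r_f$ when the other radii are fixed. Third, as $r_f\to 0$ we have $\beta_e\to\pi-\Theta(e)$, hence $T_f\to\sum 2(\pi-\Theta(e))$; and as $r_f\to\frac{\pi}{2}$ we have $T_f\to 0$. Because $\Theta\le\pi/2$, the bigons and vertices are well placed and the gluing produces an ideal pattern. The resulting cone angle at each vertex is automatic from (\ref{sing at a vertex}).

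For existence, we define the class of subsolutions
\begin{equation*}
\mathcal{P}=\{r: T_f(r)\ge \hat{\bf T}(f)\ \text{for all } f\}.
\end{equation*}
Since $T_f$ increases in the neighbors' radii and decreases in $r_f$, the componentwise maximum of two subsolutions is again a subsolution. The set $\mathcal{P}$ is nonempty, because small radii give $T_f$ close to $2\sum(\pi-\Theta)$, and this exceeds $\hat{\bf T}(f)$ by the $F'=\{f\}$ case together with $\Theta\le\pi/2$. We set $\bar r_f=\sup_{r\in\mathcal P}r_f$. Then $\bar r\in\mathcal P$ by continuity, provided $\bar r$ stays inside $(0,\frac{\pi}{2})$. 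Equality $T_f(\bar r)=\hat{\bf T}(f)$ then follows: if the inequality were strict at some $f$, we could increase $\bar r_f$ slightly and contradict maximality.

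The main obstacle is compactness, namely showing that $\bar r$ avoids the boundary of the box. The lower bound away from $0$ is easy, since the supremum dominates some fixed positive subsolution. The hard part is to rule out $r_f\to\frac{\pi}{2}$ for faces in some nonempty set $F'$, and this is exactly where condition (\ref{equ-3}) must enter. Suppose a sequence in $\mathcal P$ has $r_f\to\pi/2$ exactly for $f\in F'$. We sum $T_f$ over $F'$ and analyse the limits. For an edge between two faces of $F'$, hemispheres meeting at angle $\Theta(e)$ give $\cos r_f\,\beta\to$ something that totals at most $\Theta(e)$ per side. More precisely, we compare with the Gauss--Bonnet identity (\ref{equ-1}) applied to the union of the disks in $F'$: the total curvature equals $2\pi\chi$ minus area minus cone terms, and the boundary bigons contribute $2\Theta(e)$. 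This should give $\limsup\sum_{F'}T_f\le\sum_{E(F')}2\Theta(e)$. Strict inequality in (\ref{equ-3}) then contradicts $T_f\ge\hat{\bf T}(f)$. I expect this step to need the most careful estimate, and I would first try Gauss--Bonnet on the region $\bigcup_{F'}D_f$ together with the fact that its area tends to be large.

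For uniqueness, we use a maximum-principle argument. Take two solutions $r,r'$ and consider the set $F'$ on which $r_f/r'_f$ (or $k_f-k'_f$) is extremal. By the monotonicity facts, the sum of $T_f$ over this set compares strictly unless $r=r'$ on all of it, and connectivity of $G$ spreads the equality to every face. Finally, the direction (ii)$\Rightarrow$(i) follows from the Gauss--Bonnet computation above applied to an actual pattern. This gives exactly $\sum_{F'}T_f<\sum_{E(F')}2\Theta(e)$, with strictness coming from the positive area of the disks in $F'$.
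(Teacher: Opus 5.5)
\textbf{The genuine gap is the nonemptiness of $\mathcal{P}$, and with it your account of where condition (i) enters.}

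First, your limit is off. With the neighbours fixed and $r_f\to 0$, the triangle degenerates and gives $\beta_e\to\Theta(e)$. Hence $T_f\to\sum_{e\in E(f)}2\Theta(e)$ (Lemma \ref{lem-2.6}), not $\sum 2(\pi-\Theta(e))$; the two agree only when $\Theta(e)=\pi/2$.

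More seriously, you cannot shrink all radii at once and keep every face near its single-face bound. Gauss--Bonnet on each bigon gives $T_{f,e}+T_{g,e}=2\Theta(e)-\mathrm{Area}(\Omega_e)<2\Theta(e)$. For instance, equal radii tending to $0$ give $T_f\to\sum_{e\in E(f)}\Theta(e)$, only half that bound.

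In fact no argument using only the cases $F'=\{f\}$ can work. Take $\hat{\mathbf{T}}(f)=\sum_{e\in E(f)}2\Theta(e)-\epsilon$. It satisfies all singleton conditions, but for small $\epsilon$ it violates (i) at $F'=F$. Then $\mathcal{P}=\emptyset$: every pattern has $\mathbf{T}\in\mathcal{T}$ by the (ii)$\Rightarrow$(i) computation, so $\mathbf{T}\ge\hat{\mathbf{T}}$ would force $\hat{\mathbf{T}}\in\mathcal{T}$. Your set is the paper's superpattern set $\mathcal{K}_2$ written in radii. For it, nonemptiness is exactly where the full condition (i) is needed, and it is the genuinely hard step; the paper notes this takes more effort.

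Conversely, the step you flag as hard is trivial here. Since $T_f<2\pi|E(f)|\cos r_f$, letting $r_f\to\pi/2$ forces $T_f\to 0<\hat{\mathbf{T}}(f)$, with no use of (i). The estimate you propose there would not give a contradiction anyway: $\sum_{F'}\hat{\mathbf{T}}(f)\le\sum_{F'}T_f\le\sum_{E(F')}2\Theta(e)$ is perfectly consistent with (i).

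\textbf{The repair is to reverse the inequality, as the paper does.} Take $\mathcal{K}_1=\{T_f\le\hat{\mathbf{T}}(f)\ \forall f\}$ and the supremum of $k_f=\cot r_f$, i.e.\ the infimum of radii; the lattice property then uses componentwise minima of radii.
\begin{itemize}
\item Nonemptiness is immediate from Lemma \ref{lem-2.5}: take all radii near $\pi/2$.
\item Condition (i) is what keeps the radii away from $0$. Suppose along a sequence $r_f\to 0$ exactly for $f\in F'$. On edges joining $F'$ to its complement, $T_{f,e}\to 2\Theta(e)$. On edges inside $F'$, $T_{f,e}+T_{g,e}=2\Theta(e)-\mathrm{Area}(\Omega_e)\to 2\Theta(e)$. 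So $\sum_{F'}T_f\to\sum_{E(F')}2\Theta(e)>\sum_{F'}\hat{\mathbf{T}}(f)$, contradicting $T_f\le\hat{\mathbf{T}}(f)$.
\end{itemize}
This is the Gauss--Bonnet limit you were reaching for, with the inequalities pointing the right way.

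\textbf{Your uniqueness sketch is also missing an ingredient.} Monotonicity in the own radius plus anti-monotonicity in the neighbours' radii cannot suffice: Euclidean cone angles have exactly that structure and are scale invariant. What breaks the symmetry is Lemma \ref{lem-2.2}(c) in the variables $u=\ln k$. Suppose $c=\max_f(u_f-u'_f)>0$ is attained at $f$, and let $u'+c$ denote $u'$ with $c$ added to every coordinate. Then $\hat{\mathbf{T}}(f)=T_f(u)\ge T_f(u'+c)>T_f(u')=\hat{\mathbf{T}}(f)$, a contradiction, and no connectivity argument is needed. The additive variant with $k_f-k'_f$ fails, because $\partial_{k_f}T_{f,e}+\partial_{k_g}T_{f,e}<0$ when $k_f\gg k_g$.

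Alternatively, follow the paper. Any second solution lies in the Perron set, so it is dominated componentwise by the extremal one. Lemma \ref{lem-2.2}(d) then contradicts the total bigon area $2\sum_E\Theta(e)-\sum_F\hat{\mathbf{T}}(f)$ being the same for both.
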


\subsection{Convergence of Thurston algorithm}\label{sec-1.5}
Thurston presented an elegant algorithm for finding a circle packing with a prescribed data in \cite{Thurston1979TheGA}. In this paper, we show that this algorithm can be used to approximate the circle pattern in Theorem \ref{Rigidity theorem} as well.

Using the same notation as introduced in Theorem \ref{Rigidity theorem}, let $\hat{{\bf{T}}}\in \mathcal{T}$ and let
$$\langle \hat{T}_1,\hat{T}_2,\cdots,\hat{T}_{|F|}\rangle=\langle \hat{\textbf {T}}(f_1),\hat{{\bf{T}}}(f_2),\cdots,\hat{{\bf{T}}}({f_{|F|})}\rangle .$$
We call the circle pattern in (ii) of Theorem \ref{Rigidity theorem} the target circle pattern and denote it by $\hat{\mathcal{D}}$. Let $\hat{\textbf{k}}: F\rightarrow (0,+\infty)$ be the map representing the vector of the geodesic curvatures of the boundary circles of the disks of $\hat{\mathcal{D}}$ and let 
$$
\langle \hat{k}_1,\hat{k}_2,\cdots ,\hat{k}_{|F|}\rangle=\langle\hat{\textbf{k}}(f_1),\hat{\textbf{k}}(f_2),\cdots, \hat{\textbf{k}}(f_{|F|})\rangle .
$$
Now we describe how to apply Thurston's algorithm to find the target circle pattern $\hat{\mathcal{D}}$. 

{\bf Step 1.} Let $\textbf{k}^0: F\rightarrow (0,+\infty)$ be an arbitrarily chosen initial vector of the geodesic curvatures of the boundary circles of the disks corresponding to the faces of $F$. From Proposition 6 in \cite{MR4683863} (which we recapitulate its proof at the beginning of Section \ref{sec-1.5}), there is a unique ideal spherical circle pattern $\mathcal{D}^{0}$ (up to isometry) underlying $(S, G,\Theta)$ such that the geodesic curvature of the circles in $\mathcal{D}^{0}$ satisfies
$$
\langle k_1^0,k_2^0,\cdots ,k_{|F|}^0\rangle=\langle\textbf{k}^0(f_1),\textbf{k}^0(f_2),\cdots, \textbf{k}^0(f_{|F|})\rangle 
$$

Here, $k^0_i$ is defined as the geodesic curvature of the circle in $\mathcal{D}^0$ corresponding to the face $f_i$ for each $i=1,2,\cdots,|F|$. Hence, the total geodesic curvatures of the generalized circles is given by

 $$
\langle T^0_1,T^0_2,\cdots,T^0_{|F|}\rangle=\langle{\bf{T}}^0(f_1),{\bf{T}}^0(f_2),\cdots,{\bf{T}}^0(f_{|F|})\rangle .
$$

Here, $\mathbf{T^0}:F\rightarrow(0,+\infty)$ is determined by $\mathbf{k^0}$. By Lemma\ref{lem-2.5} and Lemma \ref{lem-2.6}, we have $\mathbf{T^0}\in \mathcal{T}$.

{\bf Step 2.} For each $1\le i\le |F|$, We take the following action:
without changing the angles of any two intersecting circles and without changing the geodesic curvature of the boundary circle of any other disk, we adjust the geodesic curvature $k^0_i$ of $\partial D^0_i$ to $k^1_i$ by deforming $D_i^0$ to $D_i^1$ so that the total geodesic curvature $T_i^1$ of $\partial D_i^1$ is equal to $\hat{T}_i$ (keep in mind that this action may change the length and hence the total geodesic curvature of the boundary circle of any disk $D^0_j$ intersecting $D^0_i$). Define a new vector of geodesic curvatures $\textbf{k}^1:F\rightarrow (0,+\infty)$ by 
$$\langle\textbf{k}^1(f_1),\textbf{k}^1(f_2),\cdots, \textbf{k}^1(f_{|F|})\rangle=\langle k_1^1,k_2^1,\cdots ,k_{|F|}^1\rangle.$$ 

{\bf Step 3.} Replace $\textbf{k}^0$ by $\textbf{k}^1$ and repeat Steps 1 and 2. 

Applying Steps 1, 2 and 3 inductively, we obtain a sequence $\{\mathcal{D}^m\}_{m=0}^\infty$ of ideal spherical circle patterns underlying $(S, G,\Theta)$ such that 
$$
\langle k_1^m,k_2^m,\cdots ,k_{|F|}^m\rangle=\langle\textbf{k}^m(f_1),\textbf{k}^m(f_2),\cdots, \textbf{k}^m(f_{|F|})\rangle 
$$
and
 $$
\langle T^m_1,T^m_2,\cdots,T^m_{|F|}\rangle=\langle{\bf{T}}^m(f_1),{\bf{T}}^m(f_2),\cdots,{\bf{T}}^m(f_{|F|})\rangle \in \mathcal{F} .
$$
We simply call $\mathcal{D}^m$ the $m^{th}$ approximation of the target pattern $\hat{\mathcal{D}}$. We prove the following convergence theorem. 

\begin{theorem}\label{Thurston's algorithm}
Let $S$ be a closed oriented surface with an embedded weighted graph $(G,\Theta)$, where
\begin{itemize}
    \item $G=\{V,E,F\}$ denotes the graph's vertex set $V$, edge set $E$ and face set $F$, and 
    \item $\Theta:E\rightarrow (0,\frac{\pi}{2}]$ assigns weights to the edges.
\end{itemize}
Assume that $\hat{{\bf{T}}}: F\rightarrow (0, \infty)$ lies in $\mathcal{T}$. Then for any initial ideal circle pattern $\mathcal{D}^0$ underlying $(S,G,\Theta)$ with the corresponding total geodesic curvature vector ${\bf{T}}^0\in \mathcal{T}$, the circle pattern sequence $\{\mathcal{D}^m\}_{m=1}^\infty$, constructed from the Thurston algorithm, converges to $\hat{\mathcal{D}}$ with $\lim_{m\rightarrow\infty}{\bf{k}}^m =\hat{\bf{k}} $ and $\lim_{m\rightarrow\infty }{\bf{T}}^m =\hat{\bf{T}}$. 

Furthermore, if $\bf{k}^0$ satisfies ${\bf{T}}^0\le \hat{\bf{T}}$, then ${\bf{k}}^m(f)$ increases to $\hat{\bf{k}}(f)$ as $m\rightarrow \infty$ for any $f\in F$ and ${\bf{T}}^m\le \hat{\bf{T}}$ for any $m$; if $\bf{k}^0$ satisfies ${\bf{T}}^0\ge \hat{\bf{T}}$, then ${\bf{k}}^m(f)$ decreases to $\hat{\bf{k}}(f)$ as $m\rightarrow \infty$ for any $f\in F$ and ${\bf{T}}^m\ge \hat{\bf{T}}$ for any $m$.
\end{theorem}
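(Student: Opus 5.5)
The plan is to treat Thurston's algorithm as a monotone iteration and control it with the same monotonicity lemmas that drive the Perron argument for Theorem \ref{Rigidity theorem}. I take for granted the facts established earlier, in Lemma \ref{lem-2.5}, Lemma \ref{lem-2.6} and Proposition 6 of \cite{MR4683863}. First, for every curvature vector $\mathbf{k}:F\to(0,+\infty)$ there is a unique ideal spherical circle pattern underlying $(S,G,\Theta)$, and its total geodesic curvature $\mathbf{T}(\mathbf{k})$ depends continuously on $\mathbf{k}$ and lies in $\mathcal{T}$. Second, there is a monotonicity principle. Fix all $k_j$ with $j\neq i$. Then $T_i$ is strictly increasing in $k_i$ and ranges over an interval containing $\hat T_i$; this is what makes Step 2 well defined. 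Meanwhile each $T_j$ with $j\neq i$ is non-increasing in $k_i$, because enlarging $k_i$ shrinks $D_i$, which lengthens the exposed boundary arcs of its neighbours.

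For the monotone statements, assume $\mathbf{T}^0\le\hat{\mathbf{T}}$. I prove by induction that the sequence satisfies $\mathbf{T}^m\le\hat{\mathbf{T}}$ and $\mathbf{k}^m\le\mathbf{k}^{m+1}$. Suppose the adjustment of face $i$ starts from a configuration with $T_i\le\hat T_i$. Strict monotonicity of $T_i$ in $k_i$ means $k_i$ must increase to reach $\hat T_i$. That increase can only decrease the other $T_j$, so every inequality $T_j\le\hat T_j$ survives. Running through $i=1,\dots,|F|$ gives both claims at stage $m+1$.

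Next I need an upper bound. The key comparison is that $\mathbf{k}^m\le\hat{\mathbf{k}}$ for all $m$, which I again prove inductively. Suppose $k\le\hat k$ componentwise and we raise $k_i$ until $T_i=\hat T_i$. Compare with the configuration $k'$ that equals $k$ except $k'_i=\hat k_i$. Monotonicity in the other coordinates gives $T_i(k')\ge T_i(\hat{\mathbf{k}})=\hat T_i$, since lowering the neighbours' curvatures from $\hat k_j$ to $k_j$ only increases $T_i$. Hence the new $k_i$ is at most $\hat k_i$. So $\mathbf{k}^m$ is increasing and bounded, and converges to some $\mathbf{k}^\infty\le\hat{\mathbf{k}}$. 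The case $\mathbf{T}^0\ge\hat{\mathbf{T}}$ is symmetric: $\mathbf{k}^m$ decreases and is bounded below by $\hat{\mathbf{k}}$, which is positive, so the limit stays in $(0,+\infty)^F$.

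In the monotone case, convergence is then a continuity argument. At each limit point, every intermediate sub-step of the algorithm freezes, so $T_i(\mathbf{k}^\infty)=\hat T_i$ for all $i$. By the uniqueness part of Theorem \ref{Rigidity theorem}, $\mathbf{k}^\infty=\hat{\mathbf{k}}$, and continuity of $\mathbf{T}$ yields $\mathbf{T}^m\to\hat{\mathbf{T}}$ and convergence of the patterns.

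For a general initial $\mathbf{k}^0$, I sandwich the orbit between two monotone orbits. Choose $\underline{\mathbf{k}}^0\le\mathbf{k}^0\le\overline{\mathbf{k}}^0$ with $\mathbf{T}(\underline{\mathbf{k}}^0)\le\hat{\mathbf{T}}\le\mathbf{T}(\overline{\mathbf{k}}^0)$. Such vectors should exist by the asymptotics of $T_i$: as all $k_j\to 0$ the pattern approaches the boundary of $\mathcal{T}$, and as $k\to\infty$ we get $T\to$ large, but this must be checked with the estimates of Section 2. The update map is order-preserving, which follows from the same two-sided monotonicity. Therefore $\underline{\mathbf{k}}^m\le\mathbf{k}^m\le\overline{\mathbf{k}}^m$ for all $m$, and both bounds converge to $\hat{\mathbf{k}}$.

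I expect the main obstacle to be twofold: proving the order-preservation of the one-face update, and producing the sub- and super-solutions $\underline{\mathbf{k}}^0,\overline{\mathbf{k}}^0$. Both need quantitative control of how $T_i$ behaves as the $k_j$ tend to $0$ or $\infty$, using the condition $\hat{\mathbf{T}}\in\mathcal{T}$. I would try to get this control from the explicit bigon formulas, in the same way the Perron barrier lemmas are obtained.
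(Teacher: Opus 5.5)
Your monotone-case argument is correct, and it carries over verbatim to the simultaneous update $\Psi(\mathbf u)=(\psi_1(\mathbf u),\dots,\psi_{|F|}(\mathbf u))$ that the paper actually iterates. Order preservation is also not an obstacle. Write $\mathbf k_{-i}$ for the coordinates other than $i$. If $\mathbf k\le\mathbf k'$, then
$$T_i(\psi_i(\mathbf k'),\mathbf k_{-i})\ge T_i(\psi_i(\mathbf k'),\mathbf k'_{-i})=\hat T_i=T_i(\psi_i(\mathbf k),\mathbf k_{-i}),$$
so (a) gives $\psi_i(\mathbf k)\le\psi_i(\mathbf k')$. This is the same comparison you already made to get $\mathbf k^m\le\hat{\mathbf k}$.

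The genuine gap is the super-solution: you need $\overline{\mathbf k}^0\ge\mathbf k^0$ with $\mathbf T(\overline{\mathbf k}^0)\ge\hat{\mathbf T}$, and the heuristic you give for it is false. $\mathbf T$ never becomes large, since $T_i<2\sum_{e\in E(f_i)}\theta_e$ for every pattern. Along a ray $t\mathbf k^0$ with $t\to\infty$, the formula in the proof of Lemma \ref{lem-2.3} gives
\[
T_i(t\mathbf k^0)\longrightarrow\sum_{e\in E(f_i)}2\,\mathrm{arccot}\Bigl(\frac{k^0_{j(e)}/k^0_i+\cos\theta_e}{\sin\theta_e}\Bigr),
\]
where $f_{j(e)}$ is the face across $e$. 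These limits are Euclidean cone angles fixed by the ratios in $\mathbf k^0$. They sum to $2\sum_e\theta_e$, but a face whose curvature is small relative to its neighbours gets a small limit, which can lie below $\hat T_i$. So no ``$k\to\infty$'' asymptotics can supply this barrier. You must choose the direction, and that needs an input beyond (a) and (b).

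That input is (c) of Lemma \ref{lem-2.2}, which your proposal never uses. Along $\mathbf k=e^s\hat{\mathbf k}$, i.e.\ $\mathbf u=\hat{\mathbf u}+s(1,\dots,1)$, we have
\[
\frac{d}{ds}T_i(e^s\hat{\mathbf k})=\sum_{e\in E(f_i)}\Bigl(\frac{\partial T_{i,e}}{\partial u_i}+\frac{\partial T_{i,e}}{\partial u_{j(e)}}\Bigr)>0\quad\text{for every } i.
\]
Hence $\mathbf T(e^s\hat{\mathbf k})\ge\hat{\mathbf T}$ for $s\ge 0$ and $\mathbf T(e^s\hat{\mathbf k})\le\hat{\mathbf T}$ for $s\le 0$. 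Choose $s$ with $e^{-s}\hat{\mathbf k}\le\mathbf k^0\le e^{s}\hat{\mathbf k}$. This produces both barriers at once, and your sandwich argument then completes the proof.

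With this repair, your route is genuinely different from the paper's. The paper uses the same diagonal dominance quantitatively. On any box $B(\hat{\mathbf u},\rho)$, implicit differentiation gives $\sum_j|\partial\psi_i/\partial u_j|\le\lambda<1$, so $\Psi$ is an $\ell^\infty$-contraction with fixed point $\hat{\mathbf u}$. Since $\rho$ is arbitrary, this gives convergence from every starting point at a geometric rate. The monotone statements are then read off from $\Psi(\mathcal K_1)\subset\mathcal K_1$ and $\Psi(\mathcal K_2)\subset\mathcal K_2$. Your order-theoretic argument avoids derivative bounds and compactness constants, and it makes the monotone half of the theorem transparent. However, it gives no convergence rate, and it relies on the uniqueness part of Theorem \ref{Rigidity theorem} to identify the limit.
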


\section{Preliminary}
We first state the well-known spherical contangent 4-part formula. 
\begin{figure}[H]
{
\centering
\includegraphics[height=3cm]{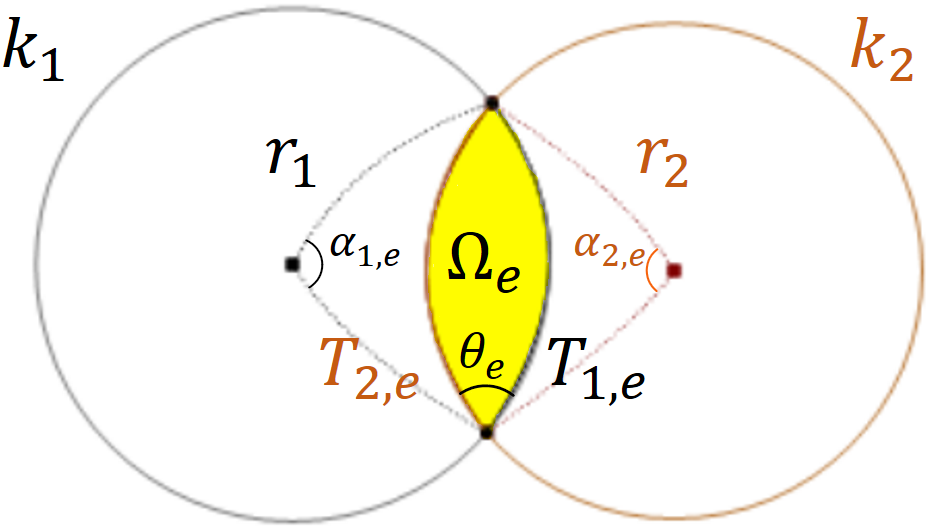} 

\caption{Two intersecting conical spherical disks.}
\label{fig-6}
}
\end{figure}
\begin{lemma}[Spherical cotangent 4-part formula]\label{cotagent 4-part formula}
Let $D_{2\pi}(r_i)$, $i=1,2$, be two conical spherical disks intersecting at an angle $\theta_e\in(0,\frac{\pi}{2}]$, and let $\alpha_{i,e}$ be the measure (in radians) of the central angle of the edge $e$ on $D_{2\pi}(r_i)$ (see Figure \ref{fig-6}). Then
$$
\cot \frac{\alpha_{i,e}}{2}=\frac{1}{\sin\theta_e}(\cot r_{j}\cdot \sin r_i+\cos r_i\cos\theta_e),
$$
where $\{i, j\}=\{1, 2\}$.
\end{lemma}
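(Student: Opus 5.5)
The plan is to reduce the statement to the classical cotangent four-part formula in a single spherical triangle. Put both disks on $\mathbb{S}^2$; the cone angle is $2\pi$, so they are honest spherical caps, with centers $O_1,O_2$ and radii $r_1,r_2\in(0,\frac{\pi}{2})$. Let $P,Q$ be the two intersection points of the boundary circles; they are the endpoints of the edge $e$. Reflection in the great circle through $O_1$ and $O_2$ preserves both circles and swaps $P$ and $Q$. Hence the geodesic $O_iO_j$ bisects the central angle $\angle PO_iQ=\alpha_{i,e}$, and
$$\angle PO_iO_j=\frac{\alpha_{i,e}}{2}.$$
So I will work in the spherical triangle $O_1O_2P$. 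Its sides are $|O_iP|=r_i$ and $|O_jP|=r_j$. Since $\theta_e\in(0,\frac{\pi}{2}]$ and the disks are distinct, $P$ is not on the great circle $O_1O_2$, so the triangle is nondegenerate. Its sides are all less than $\pi$, because $r_1,r_2<\frac{\pi}{2}$.

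The next step is to find the angle at $P$. The radius $O_iP$ meets the circle $\partial D_{2\pi}(r_i)$ orthogonally at $P$. Therefore the angle between the two boundary circles at $P$ equals the angle between the radial geodesics $PO_1$ and $PO_2$, up to replacing it by its supplement. To decide which, I will check the two limiting configurations. When the circles are nearly externally tangent, the bigon is thin, so $\theta_e\to 0$, while $PO_1$ and $PO_2$ become nearly opposite. In the orthogonal case the angle is $\frac{\pi}{2}$ either way. This identifies the interior angle of the triangle at $P$ as
$$\varphi=\pi-\theta_e .$$
A cleaner argument is to rotate the tangent vector of $\partial D_i$ at $P$ by $\frac{\pi}{2}$ into the inward normal, for each $i$, and compare orientations. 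I expect this orientation/convention check to be the only delicate point: getting $\varphi$ wrong flips the sign of the $\cos\theta_e$ term.

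Finally I will apply the cotangent four-part formula to four consecutive parts of the triangle: the side $r_j=|PO_j|$, the angle $\varphi$ at $P$, the side $r_i=|PO_i|$, and the angle $\frac{\alpha_{i,e}}{2}$ at $O_i$. The formula gives
$$\cos r_i\cos\varphi=\sin r_i\cot r_j-\sin\varphi\,\cot\frac{\alpha_{i,e}}{2}.$$
If needed, this formula can itself be derived quickly from the spherical law of cosines for sides, applied to $|O_iO_j|$ in two ways and then eliminating. Substituting $\cos\varphi=-\cos\theta_e$ and $\sin\varphi=\sin\theta_e>0$, and solving for $\cot\frac{\alpha_{i,e}}{2}$, yields
$$\cot\frac{\alpha_{i,e}}{2}=\frac{1}{\sin\theta_e}\bigl(\cot r_j\sin r_i+\cos r_i\cos\theta_e\bigr).$$
The right-hand side is positive, so $\frac{\alpha_{i,e}}{2}\in(0,\frac{\pi}{2})$. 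This is consistent with the half central angle of the geometric picture, and no case split for an obtuse angle at $O_i$ is needed.
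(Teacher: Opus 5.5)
Your proposal is correct and supplies exactly the justification the paper leaves implicit: the paper gives no proof and simply invokes the classical cotangent four-part formula, which you apply in the triangle $O_iO_jP$ with angle $\pi-\theta_e$ at $P$ and $\frac{\alpha_{i,e}}{2}$ at $O_i$. The one step to tighten is the angle at $P$. Base it on the inward-normal argument: near $P$ the bigon is the intersection of two half-planes whose inward normals point along $PO_1$ and $PO_2$, so its angle is $\pi-\angle O_1PO_2$. The limiting-configuration check alone cannot rule out a switch between $\theta_e$ and $\pi-\theta_e$ at $\theta_e=\frac{\pi}{2}$.
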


Now we state two facts given in \cite{MR4683863}.

\begin{lemma}[Proposition 2 of \cite{MR4683863}]\label{lem-2.1}
Let $S$ be a closed oriented surface equipped with a conical spherical metric $\sigma$ and $\mathcal{D}=\{D_1,D_2,\cdots,D_m\}$ be an ideal spherical circle pattern on $S$. Let $\iota:\sqcup_{i=1}^m D_i\rightarrow S$ be a local isometry from the disjoint union of the disks in $\mathcal{D}$ to $S$. Set $B=\{p\in S: \iota^{-1}(p) \text{ has two points }\}$. Then $\iota^{-1}(B)$ is a disjoint union of bigons, which fills up the boundary of each $D_i$, and the equivalence relation ``$\sim$", defined by $x\sim y \leftrightarrow \iota(x)=\iota(y)$, induces isometric pairings among the bigons. 
\end{lemma}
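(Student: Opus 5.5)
The statement is local, so the plan is a local analysis followed by a short combinatorial step. We work at a point $p\in B$ and lift its two preimages to the model disks. By condition (2), $\iota^{-1}(p)=\{x,y\}$ with $x\in D_i$ and $y\in D_j$.

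First, we show $i\neq j$. If both preimages lay in the same disk $D_i$, the local isometry $\iota|_{D_i}$ would identify two points of one conical disk. Since $D_i$ has radius less than $\pi/2$ and is a convex metric disk, a deck-type identification would force $\iota(D_i)$ to self-overlap. By continuity that overlap would propagate to an open set, and its preimage would meet a third disk or the same disk three times, contradicting (2) together with (1).

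Next, we analyze the overlap of two disks. Fix $x\in D_i$ and $y\in D_j$ with $\iota(x)=\iota(y)$. Because $\iota$ is a local isometry near both points, the germ $\iota|_{D_j}^{-1}\circ\iota|_{D_i}$ is a local isometry between the spherical (or conical) disks. Away from the cone points it is the restriction of a rigid motion of $\mathbb{S}^2$, and we analytically continue it along paths in $D_i$. The set $U_{ij}=\{z\in D_i:\iota(z)\in\iota(D_j)\}$ is then identified, component by component, with the intersection in $\mathbb{S}^2$ of two round disks of radii less than $\pi/2$. Such an intersection is a lens, i.e.\ a bigon bounded by two circular arcs. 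The cone point of $D_i$ cannot lie in such a component: otherwise, since each cone point is the only point of its model disk over which $\iota$ fails to be unramified, the isometry would force the cone angles to match and $\iota(D_i)\subset\iota(D_j)$ or the reverse, contradicting (3). Hence each component is an honest spherical bigon, and the identification $x\sim y$ restricted to it is an isometry onto the corresponding bigon in $D_j$. This gives the isometric pairing.

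Then we show the bigons are disjoint. Two different bigons in $D_i$ cannot intersect, since a point in their intersection would have three preimages (in $D_i$, $D_j$ and $D_k$), violating (2). Likewise, two components of $U_{ij}$ are separated.

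Finally, we show the bigons fill up $\partial D_i$. Take a point $q$ near $\partial D_i$, that is, a boundary point of the completion of $D_i$ approached from inside. Its image lies in $S$. If it is not one of the $x_k$, then by (1) it is covered by some $\iota(D_j)$, and $\iota(D_j)$ is open. Hence $\iota(D_i)\cap\iota(D_j)$ contains points of $D_i$ arbitrarily close to $q$, so $q$ lies in the closure of a bigon. Only finitely many points are left uncovered, and these are the vertices where adjacent bigons touch. Thus $\partial D_i$ is the union of the outer arcs of the bigons.

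The main obstacle is the second step: making rigorous the analytic continuation of the developing isometry, and ruling out cone points inside the overlap. For this I would first use the radius bound $r<\pi/2$, which makes each disk strictly convex, together with condition (3).
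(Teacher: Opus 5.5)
The paper gives no proof of this lemma; it is quoted from \cite{MR4683863}. Your argument therefore has to stand on its own, and its first step is false: the two preimages of a point of $B$ can lie in the same disk. This happens whenever a face is adjacent to itself across an edge. For example, take the one-vertex, two-edge, one-face cell decomposition of the torus and glue the two kites as in the construction at the beginning of the proof of Theorem~\ref{Rigidity theorem}. The single disk $D_f$ then overlaps itself in two pairs of lenses. Each lens point has exactly two preimages, both in $D_f$, and (1)--(3) hold, (3) vacuously. A self-overlap along a lens is exactly two-to-one, so nothing forces a third preimage, and your argument for $i\neq j$ proves nothing.

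Your later steps lean on $i\neq j$: your $U_{ij}$ is all of $D_i$ when $j=i$, the cone-point exclusion invokes (3), and the disjointness count uses three distinct disks. They should be rebuilt around the partner map $\sigma:\iota^{-1}(B)\to\iota^{-1}(B)$ sending $z$ to the other preimage of $\iota(z)$. By (2) this is a well-defined, fixed-point-free involution, and it is continuous and a local isometry because $\iota$ is. One then takes the components of the open set $\iota^{-1}(B)$ as the candidate bigons, so disjointness is automatic, and $\sigma$ as the pairing.

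The substance of the lemma is that each such component $U\subset D_i$ is a lens. Your overlap step asserts this rather than proves it, and two ingredients are missing.

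(a) Maximality. The disk containing $\sigma(z)$ does not depend on $z\in U$; call it $D_j$. Let $z\in D_i$ be a frontier point of $U$ with $U\ni z_n\to z$. Then $\sigma(z_n)$ cannot accumulate at a point $y$ of the open disk $D_j$. Otherwise $\iota(z)=\iota(y)$, with $y\neq z$ by local injectivity of $\iota$ at $z$, and openness of $\iota^{-1}(B)$ would put $z$ in $U$. Hence the partners run to $\partial D_j$. Only this shows that the overlap persists until a boundary circle is reached, i.e.\ that $U$ is a whole component of the developed intersection and not some smaller open set.

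(b) Cone geometry. The disks are cones, not round disks of $\mathbb{S}^2$. Continuing $\sigma$ produces a single rigid motion $g$, and develops $U$ onto a lens, only once you know that $U$ and $\sigma(U)$ neither contain nor wind around a cone point of angle $\neq 2\pi$. Your cone-point argument treats only $c_i\in U$ with $\alpha_i\neq 2\pi$, and even there it needs (a) to reach the containment that contradicts (3). A component that avoids $c_i$ but winds around it is never excluded.

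One way to exclude it is to compare holonomies along a loop in $U$ around $c_i$. If $\alpha_i\notin 2\pi\mathbb{Z}$, then $g$ must send the developed centre of $D_i$ either to that of $D_j$ or to its antipode. The first case makes $U$ a punctured disk about $c_i$ whose partners converge to $c_j$, contradicting (a) at $c_i$. The second contradicts $r_i+r_j<\pi$. If $\alpha_i\in 2\pi\mathbb{Z}$, the developing map is a branched cover, which splits into isometric sheets over a lens missing the branch point.

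Finally, in your last step the finiteness of $\{x_k\}$ does not by itself give finitely many points of $\partial D_i$ lying over it. You need two further facts. First, lens corners cannot map into any $\iota(D_k)$, since that would create a triple point. Second, two nearby preimages of $x_k$ on $\partial D_i$ would yield a geodesic loop at the cone point $x_k$ shorter than the radius of a cone neighbourhood of $x_k$, and no such loop exists.
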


\begin{lemma}[Monotonicity \cite{MR4683863}]\label{lem-2.2}
Let $D_i$, $i=1,2$, be two conical spherical disks intersecting at an angle $\theta_e\in(0,\frac{\pi}{2}]$. Denote by $C_i$, $i=1,2$, their boundary circles with radii $r_i\in (0,\frac{\pi}{2})$, $i=1,2$. The corresponding geodesic curvatures are given by $k_i=\cot r_i\in (0,+\infty)$. Let $u_i=\ln k_i \in (-\infty,+\infty)$. The intersection forms a bigon $\Omega_e$ (as shown in Figure \ref{fig-6}).  For $i=1,2$, we define $T_{i,e}$ as the total geodesic curvature of this boundary arc. Then the following properties hold.

\begin{itemize}
    \item [(a)] $\frac{\partial  T_{i,e}}{\partial  u_i}>0, i=1,2$.\\
    \item [(b)] $\frac{\partial  T_{i,e}}{\partial  u_j}=\frac{\partial  T_{j,e}}{\partial  u_i}<0, \{i,j\}=\{1,2\}$.\\
    \item [(c)] $\frac{\partial  T_{i,e}}{\partial  u_i}+\frac{\partial  T_{i,e}}{\partial  u_j}>0,\{i,j\}=\{1,2\}$.\\
    \item [(d)] $\frac{\partial  Area(\Omega_e)}{\partial  u_i}<0,i=1,2$.
\end{itemize}

\end{lemma}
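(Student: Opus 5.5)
We will prove the four properties by writing $T_{i,e}$ explicitly in terms of $u_1,u_2$ and differentiating. In the bigon $\Omega_e$, the boundary arc of $C_i$ that lies in $D_j$ subtends the central angle $\alpha_{i,e}$ at the center of $D_i$. Its total geodesic curvature is therefore
$$T_{i,e}=k_i\cdot \alpha_{i,e}\sin r_i=\alpha_{i,e}\cos r_i .$$
With $k_i=e^{u_i}=\cot r_i$ we have $\cos r_i=k_i/\sqrt{1+k_i^2}$ and $\sin r_i=1/\sqrt{1+k_i^2}$. Lemma \ref{cotagent 4-part formula} then becomes
$$\cot\frac{\alpha_{i,e}}{2}=\frac{k_j+k_i\cos\theta_e}{\sin\theta_e\sqrt{1+k_i^2}} .$$
So $\alpha_{i,e}$, and hence $T_{i,e}$, is an explicit function of $(k_1,k_2)$.

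\textbf{Property (b).} Since $\cot(\alpha_{i,e}/2)$ is increasing in $k_j$, the angle $\alpha_{i,e}$ decreases in $k_j$, and $\cos r_i$ does not depend on $k_j$. This gives the sign $\partial T_{i,e}/\partial u_j<0$. For the symmetry $\partial T_{i,e}/\partial u_j=\partial T_{j,e}/\partial u_i$ there are two routes.
\begin{itemize}
\item[] \emph{Direct computation.} Differentiate $\alpha_{i,e}=2\,\mathrm{arccot}(\cdot)$ and simplify. Both mixed derivatives should reduce to the same symmetric expression in $k_1,k_2,\theta_e$, roughly $-2k_1k_2\sin\theta_e/\bigl(\text{something symmetric}\bigr)$. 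The natural candidate for the denominator is $k_1^2+k_2^2+2k_1k_2\cos\theta_e+\sin^2\theta_e$.
\item[] \emph{Conceptual check.} Exhibit a potential $\Phi(u_1,u_2)$ with $\partial\Phi/\partial u_i=T_{i,e}$. One way to find it is via Gauss--Bonnet applied to $\Omega_e$.
\end{itemize}

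\textbf{Property (d).} Gauss--Bonnet on the bigon, whose two corners each have interior angle $\theta_e$, gives
$$Area(\Omega_e)+T_{1,e}+T_{2,e}+2(\pi-\theta_e)=2\pi,$$
that is, $Area(\Omega_e)=2\theta_e-T_{1,e}-T_{2,e}$. Therefore
$$\frac{\partial Area(\Omega_e)}{\partial u_i}=-\Bigl(\frac{\partial T_{i,e}}{\partial u_i}+\frac{\partial T_{j,e}}{\partial u_i}\Bigr).$$
By the symmetry in (b), the bracket equals $\partial T_{i,e}/\partial u_i+\partial T_{i,e}/\partial u_j$. So (d) is equivalent to (c), and it suffices to prove (c).

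\textbf{Properties (a) and (c).} Geometrically, if all curvatures are scaled up together, both disks shrink and the bigon area decreases. This suggests computing $\partial_{u_i}T_{i,e}+\partial_{u_j}T_{i,e}$ directly from the explicit formula, i.e. the derivative of $T_{i,e}$ along the diagonal direction $(1,1)$. Property (a) then follows from (c) together with (b): $\partial_{u_i}T_{i,e}>-\partial_{u_j}T_{i,e}>0$.

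\textbf{Main obstacle.} The hardest step will be the sign of the diagonal derivative in (c). Under the scaling $k\mapsto tk$, both $\cos r_i$ and $\alpha_{i,e}$ move, in competing directions. I will try two things first.
\begin{itemize}
\item[] Use the variables $x_i=\cos r_i$ and write $\alpha_{i,e}$ via the spherical law of cosines in the triangle formed by the two centers and a corner of the bigon. The diagonal derivative should then reduce to a positive rational expression, using that $\theta_e\le\pi/2$ (so $\cos\theta_e\ge0$) and $r_i<\pi/2$.
\item[] As a fallback, argue geometrically via (d) and a Schläfli-type formula for the bigon: when both radii shrink at a fixed intersection angle, the bigon is contained in the previous one.
\end{itemize}
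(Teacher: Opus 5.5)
\textbf{Gap: property (c), equivalently (d), is never proved.} Your reductions are correct, and they are the paper's:
\begin{itemize}
\item (b) by differentiating $T_{i,e}=2\cos r_i\,\mathrm{arccot}\,X$ with $X=\cot(\alpha_{i,e}/2)$; your guessed denominator $k_1^2+k_2^2+2k_1k_2\cos\theta_e+\sin^2\theta_e$ is exactly what comes out.
\item Gauss--Bonnet, $Area(\Omega_e)=2\theta_e-T_{1,e}-T_{2,e}$, together with the symmetry from (b), gives (c)$\Leftrightarrow$(d).
\item (a) follows from (b) and (c).
\end{itemize}
The paper runs the equivalence as (d)$\Rightarrow$(c). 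It takes (d) from the remark that, with $k_j$ fixed, the bigon shrinks as $k_i$ grows.

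Your proposal never establishes the one nontrivial inequality, (c) or equivalently (d). You only list two things to try, so the core of the lemma is missing.

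Your fallback does not supply it as written. Appealing to (d) is circular once you have shown (d)$\Leftrightarrow$(c). Also, shrinking both radii together only controls $\partial_{u_1}Area(\Omega_e)+\partial_{u_2}Area(\Omega_e)$, which is the sum of the two instances of (c), not each one. The usable geometric fact is the one-radius version. With $r_j$ and $\theta_e$ fixed, the distance $d$ between the centers satisfies $\partial d/\partial r_i=\cos(\alpha_{i,e}/2)\in(0,1)$; here $\cos\theta_e\ge 0$ forces $\alpha_{i,e}<\pi$. So decreasing $r_i$ moves the center by less than the loss in radius. The new disk then lies in $D_i$, and the new bigon lies inside the old one. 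This gives monotonicity of the area, but for the derivative it only yields $\partial Area(\Omega_e)/\partial u_i\le 0$, not the strict inequality.

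\textbf{How to close it.} Your first route works, though the result is not a rational expression. Use
\begin{itemize}
\item $\partial_{k_i}X=(\cos\theta_e-k_ik_j)/\bigl(\sin\theta_e(1+k_i^2)^{3/2}\bigr)$,
\item the formula for $\partial T_{i,e}/\partial u_j$ from (b),
\item $X/(1+X^2)=\tfrac12\sin\alpha_{i,e}$.
\end{itemize}
Then everything collapses to
\[
\frac{\partial T_{i,e}}{\partial u_i}+\frac{\partial T_{i,e}}{\partial u_j}
=\frac{k_i}{(1+k_i^2)^{3/2}}\bigl(\alpha_{i,e}-\sin\alpha_{i,e}\bigr)
=\cos r_i\,\sin^2 r_i\,\bigl(\alpha_{i,e}-\sin\alpha_{i,e}\bigr)>0 .
\]
This contains $\alpha_{i,e}$ itself. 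Its positivity comes from $\alpha>\sin\alpha$ for $\alpha>0$, not from $\cos\theta_e\ge 0$.

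This gives (c), then strict (d) through your equivalence, then (a). Completed this way, your argument is more self-contained than the paper's, whose key input (d) is asserted rather than derived.
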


The previous Lemma \ref{lem-2.2} is a key for the proof of Theorem \ref{Rigidity theorem} given in \cite{MR4683863}, which applies Colin de Verdi`ere’s variation method. It is also a key for us to develop
a proof for this theorem by using the classical Perron method. So, we present a slightly simpler proof of this lemma here. 
\begin{proof} Using the spherical $4$-part cotangent formula (Lemma \ref{cotagent 4-part formula}), we obtain
$$
\cot \frac{\alpha_{i,e}}{2}=\frac{1}{\sin\theta_e}(\cot r_{j}\cdot \sin r_i+\cos r_i\cos\theta_e)=\frac{1}{\sin\theta_e}\cdot\frac{k_j+k_i\cos\theta_e}{\sqrt{k_i^2+1}}.
$$
Thus, $$\alpha_{i,e}=2 \text{arccot} \left( \frac{1}{\sin\theta_e}\cdot\frac{k_j+k_i\cos\theta_e}{\sqrt{k_i^2+1}}\right)$$
and $$
T_{i,e}=k_i\cdot l_{i,e}=\alpha_{i,e}\cos r_i= \frac{2k_i}{\sqrt{k_i^2+1}}\text{arccot} \left( \frac{1}{\sin\theta_e}\cdot\frac{k_j+k_i\cos\theta_e}{\sqrt{k_i^2+1}}\right).
$$ 
Clearly,
$$\frac{\partial T_{i,e}}{\partial u_{j}}=-\frac{2k_ik_j\sin\theta_e}{k_i^2+k_j^2+2k_ik_j\cos\theta_e+\sin^2\theta_e}<0.$$
By a symmetric argument, we can see that $\frac{\partial T_{j,e}}{\partial u_{i}}<0.$ We have proved (b).

Note that if $k_j$ is fixed, then the area of the bigon $\Omega_e$ decreases as $k_i$ increases. This implies (d); that is, $\frac{\partial  Area(\Omega_e)}{\partial  u_i}<0.$

By the Gauss-Bonnet formula, 
\begin{equation}\label{Gauss-Bonnet formula on the quadrilateral}
 Area(\Omega_e)=2\theta_e-T_{i,e}-T_{j,e}.   
\end{equation}
Then 
$$\frac{\partial Area(\Omega_e)}{\partial u_i}=-\left(\frac{\partial T_{i,e}}{\partial u_i}+\frac{\partial T_{j,e}}{\partial u_i}\right)<0.$$
Thus,
$$
\frac{\partial T_{i,e}}{\partial u_i}+\frac{\partial T_{i,e}}{\partial u_j}=\frac{T_{i,e}}{\partial u_i}+\frac{T_{j,e}}{\partial u_i}>0,
$$ 
which proves (c). 

It follows that 
$$\frac{\partial T_{i,e}}{\partial u_i}=(\frac{\partial T_{i,e}}{\partial u_i}+\frac{\partial T_{i,e}}{\partial u_j})+(-\frac{\partial T_{i,e}}{\partial u_j})>0,$$
which proves (a).
\end{proof}

The first two properties in Lemma \ref{lem-2.2} establish the strict monotonicity of the total geodesic curvature $T_{i,e}$ in the sense that it increases monotonically with $u_i$ (or $k_i$) but decreases with $u_j$ (or $k_j$), where ${i,j} = {1,2}$; the third property shows that $u_i$ dominates $u_j$ in determining $T_{i,e}$, which is a key for us to prove the convergence of the modified Thurston algorithm in Section \ref{sec-4}; the fourth property reveals an inverse proportionality between the area of $\Omega_e$ and the curvature parameters $k_1$ and $k_2$.

\begin{lemma}\label{lem-2.3}
Let $f$ be a polygon with n vertices $\langle v_1,v_2,\cdots,v_n\rangle$ and n weighted edges $\langle(e_1,\theta_1),(e_2,\theta_2),\cdots,(e_n,\theta_n)\rangle$. Suppose that the geodesic curvature of the conical circle $C_f$ corresponding to $f$ is $k_f$ and the total geodesic curvature is $T_f$. Let $k_i$ be the geodesic curvature of the neighboring conical circle against the edge $e_i$,$i=1,2,\cdots,n$ (as shown in Figure \ref{fig-7}). Then $T_f$ is uniquely determined by $k_f$ and $k_1,k_2,\cdots,k_n$. Moreover, $T_f(k_f,k_1.\cdots,k_n)$ increases strictly with $k_f$ and decreases strictly with $k_i$ for each $i=1,\cdots, n$.

\end{lemma}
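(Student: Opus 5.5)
The plan is to decompose $T_f$ edge by edge. By Lemma \ref{lem-2.1}, the boundary circle $C_f$ of the conical disk $D_f$ is filled up by the boundary arcs of the bigons $\Omega_{e_1},\dots,\Omega_{e_n}$ corresponding to the edges of $f$. Consecutive bigons meet only at the vertices $v_i$, so these arcs cover $C_f$ without overlap. Since the geodesic curvature $k_f$ is constant along $C_f$, the total geodesic curvature is additive over these arcs:
\[
T_f=\sum_{i=1}^n T_{f,e_i},
\]
where $T_{f,e_i}=k_f\, l_{f,e_i}$ is the total geodesic curvature of the arc of $C_f$ bounding $\Omega_{e_i}$.

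The key observation is that each bigon $\Omega_{e_i}$ is locally the intersection of two round spherical disks. This holds because the bigons contain no singularities: the cone point of $D_f$ is its center, and the angle condition keeps each bigon away from it. Hence the arc $l_{f,e_i}$, or equivalently its central angle $\alpha_{f,e_i}$ measured in $D_f$, is computed exactly as in Lemma \ref{cotagent 4-part formula}. It depends only on $r_f$, on the radius $r_i$ of the neighboring circle, and on $\theta_i$. Explicitly, as in the proof of Lemma \ref{lem-2.2},
\[
T_{f,e_i}=\frac{2k_f}{\sqrt{k_f^2+1}}\,\text{arccot}\left(\frac{1}{\sin\theta_i}\cdot\frac{k_i+k_f\cos\theta_i}{\sqrt{k_f^2+1}}\right).
\]
The formula involves neither the cone angle of $D_f$ nor any other circles. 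So $T_f$ is a well-defined function of $(k_f,k_1,\dots,k_n)$, given the fixed weights $\theta_i$, which proves the uniqueness claim.

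Monotonicity then follows termwise from Lemma \ref{lem-2.2}, using that $u=\ln k$ is strictly increasing in $k$.
\begin{itemize}
\item By part (a), each $T_{f,e_i}$ is strictly increasing in $k_f$, so the sum $T_f$ is strictly increasing in $k_f$.
\item By part (b), $T_{f,e_i}$ is strictly decreasing in $k_i$. The variable $k_i$ appears only in the $i$-th summand, so $T_f$ is strictly decreasing in each $k_i$.
\end{itemize}

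One subtlety is that the same neighboring face may sit across several edges of $f$, or $f$ may even be adjacent to itself. In that case the $k_i$ are not independent variables. The statement should be read with $k_1,\dots,k_n$ as independent arguments, one per edge, and the argument above handles that directly. If a neighbor is repeated, the decrease in its curvature still holds, since every summand involving it decreases.

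The main obstacle is the step justifying that each arc length depends only on the pair of circles at that edge. This requires that the two endpoints of the arc are exactly the two intersection points of $C_f$ and $C_i$. It also requires that the geometry of $\Omega_{e_i}$ is that of two genuine round disks in $\mathbb{S}^2$, and that the arcs tile $C_f$. I would justify this from Lemma \ref{lem-2.1} together with the restriction $\theta\le\pi/2$ and $r<\pi/2$. Under these conditions the bigon embeds in a standard spherical chart, which is exactly the situation of Lemma \ref{cotagent 4-part formula}.
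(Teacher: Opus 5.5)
Your proposal is correct and follows the paper's proof. You split $T_f=\sum_i T_{f,e_i}$, compute each term from the spherical cotangent 4-part formula as an explicit arccot expression in $(k_f,k_i,\theta_i)$, and get both monotonicity claims termwise from (a) and (b) of Lemma \ref{lem-2.2}; your appeals to Lemma \ref{lem-2.1} for the tiling of $C_f$ by bigon arcs and your remark on repeated neighbors fill in points the paper leaves implicit.
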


\begin{figure}[H]
{
\centering
\includegraphics[height=6cm]{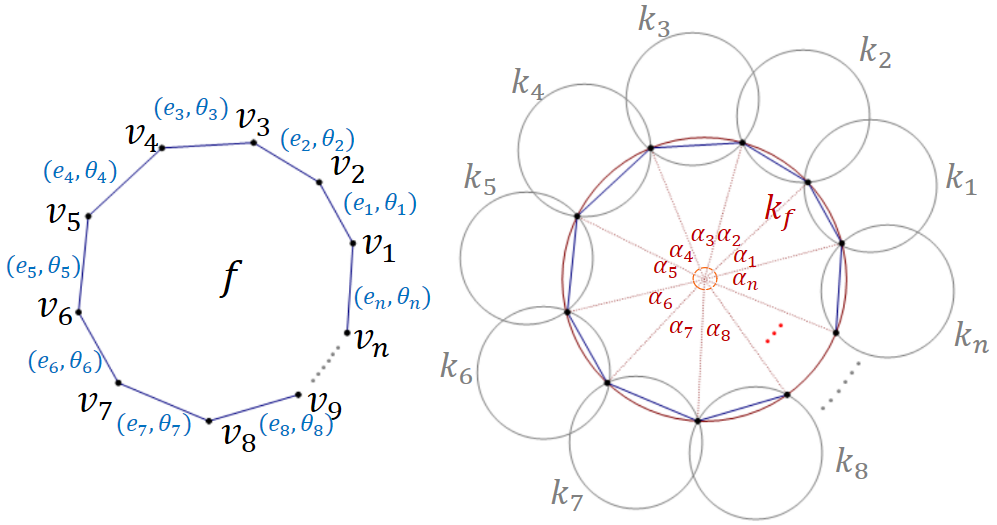} 

\caption{Spherical circle pattern on a polygon.}
\label{fig-7}
}
\end{figure}

\begin{proof}
Using the spherical cotangent 4-part formula (see Lemma \ref{cotagent 4-part formula}), we obtain
\begin{equation}\label{equ-5}
    \cot\frac{\alpha_i}{2}=\frac{1}{\sin \theta_{i}}(\cot r_i\sin r_f+\cos r_f\cos \theta_{i}),\quad i=1,2,\cdots,n.
\end{equation}
where $r_i$ is the radius of $C_i$, $k_i=\cot r_i$, and $r_f$ is the radius of $C_f$. Then 
$$
\begin{aligned}
\alpha_i=2\text{arccot}\frac{1}{\sin\theta_{i}}\cdot\frac{k_i+k_f\cos\theta_{i}}{\sqrt{k_f^2+1}}.
\end{aligned}
$$
Therefore,
$$
\begin{aligned}
T_f&=\sum_{i=1}^n T_{f,i}=k_f\cdot\sum_{i=1}^n\alpha_{i}\sin r_f\\
&=\frac{k_f}{\sqrt{1+k_f^2}}\sum_{i=1}^n2\text{arccot}(\frac{1}{\sin\theta_{i}}\cdot\frac{k_i+k_f\cos\theta_{i}}{\sqrt{k_f^2+1}}).
\end{aligned}
$$
Clearly, 
the function $T_f(k_f,k_1.\cdots,k_n)$ is continuous and differentiable. From  (a) and (b) of Lemma \ref{lem-2.2}, it follows that $\frac{\partial T_f}{\partial k_f}>0$ and $\frac{\partial T_f}{\partial k_i}<0$ for each $1\leq i\leq n$.

\end{proof}

In order to show the nonemptiness of the set $\mathcal{T}$ defined by (\ref{equ-3}) in Theorem \ref{Rigidity theorem}, we consider the limit behavior of the total geodesic curvature as the geodesic curvature goes to $0$ or $\infty $.

Let $\langle c_1,c_2\rangle$ be a nonnegative vector, where $c_i\in[0,+\infty]$, $i=1,2$.

\begin{lemma}\label{lem-2.5} Let $i=1,2$. 
If $c_i=0$, then $\lim _{\langle k_1,k_2 \rangle\rightarrow \langle c_1,c_2 \rangle} T_{i,e}=0$.
\end{lemma}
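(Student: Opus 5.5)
The plan is to work directly from the closed-form expression for $T_{i,e}$ obtained in the proof of Lemma \ref{lem-2.2}:
$$
T_{i,e}=\frac{2k_i}{\sqrt{k_i^2+1}}\,\text{arccot}\left(\frac{1}{\sin\theta_e}\cdot\frac{k_j+k_i\cos\theta_e}{\sqrt{k_i^2+1}}\right),
$$
where $\{i,j\}=\{1,2\}$. The idea is to bound $T_{i,e}$ by a product of two factors: a prefactor that tends to $0$ as $k_i\to 0$, and an arccot factor that stays bounded regardless of what $k_j$ does.

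First, I will bound the arccot factor uniformly. Since $k_i,k_j>0$ and $\theta_e\in(0,\frac{\pi}{2}]$, we have $\sin\theta_e>0$ and $\cos\theta_e\ge 0$. Hence the argument of arccot is positive, and therefore
$$
0<\text{arccot}(\cdot)<\frac{\pi}{2}.
$$
Geometrically, this just says $0<\alpha_{i,e}<\pi$. This bound holds for every $k_j\in(0,+\infty)$. In particular it covers the possible limits $c_j=0$, $c_j$ finite, and $c_j=+\infty$, so no case split on $c_j$ is needed.

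Second, I will bound the prefactor: $0<\frac{2k_i}{\sqrt{k_i^2+1}}\le 2k_i$. Combining the two bounds gives
$$
0<T_{i,e}<\pi k_i .
$$
Now if $c_i=0$, then along any approach $\langle k_1,k_2\rangle\to\langle c_1,c_2\rangle$ we have $k_i\to 0$, and the squeeze gives $T_{i,e}\to 0$.

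There is no real obstacle here. The one point requiring care is to note that the bound on arccot is independent of $k_j$, including when $k_j\to+\infty$ or $k_j\to 0$. This independence is what makes the limit genuinely two-variable, rather than only a limit along paths with $k_j$ fixed.
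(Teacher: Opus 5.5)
Your proof is correct, and it is a tidier version of what the paper does. The paper also starts from the closed form of $T_{i,e}$, but it splits into cases. When $c_j\neq 0$ it passes to the limit factor by factor, replacing $\frac{k_i}{\sqrt{1+k_i^2}}$ by $k_i$ and the argument of arccot by $k_j/\sin\theta_e$, which gives $\lim k_i\cdot 2\,\text{arccot}(k_j/\sin\theta_e)=0$. When $c_1=c_2=0$ it simply asserts the conclusion.

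Your uniform estimate $0<T_{i,e}<\pi k_i$ holds for every $k_j>0$, because the arccot argument is positive (equivalently $\alpha_{i,e}<\pi$). This removes the case split and treats $c_j=0$, $0<c_j<\infty$ and $c_j=+\infty$ at once. It also makes the joint two-variable limit rigorous, and in particular it supplies the justification the paper omits for $c_1=c_2=0$.

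The bound has a further payoff, because it does not depend on the neighbouring curvatures. Step I of the proof of Theorem \ref{Rigidity theorem} becomes immediate without appeal to (c) of Lemma \ref{lem-2.2}: take every $\mathbf{k}(f)$ smaller than $\min_f\hat{\mathbf{T}}(f)$ divided by $\pi$ times the maximal number of edges of a face.

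In exchange, the paper's factor-by-factor computation gives the precise asymptotics $T_{i,e}/(2k_i)\to\text{arccot}(c_j/\sin\theta_e)$ when $0<c_j<\infty$. The lemma itself does not need this.
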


\begin{proof} Let $i=1,2$.
If $c_i=0,c_j\neq 0$ for some $i, j=1,2$ and $j\neq i$, then 
$$
\begin{aligned}
\lim _{\langle k_1,k_2 \rangle\rightarrow \langle c_1,c_2 \rangle} T_{i,e}&=\lim_{\langle k_1,k_2 \rangle\rightarrow \langle c_1,c_2 \rangle}\frac{k_i}{\sqrt{1+k_i^2}}\cdot 2 \text{arccot}(\frac{1}{\sin \theta_e}\cdot\frac{k_j+k_i\cos\theta_e}{\sqrt{1+k_i^2}})\\
&=\lim_{\langle k_1,k_2 \rangle\rightarrow \langle c_1,c_2 \rangle}k_i\cdot 2 \text{arccot}(\frac{k_j}{\sin\theta_e})\\
&=0.\end{aligned}$$

If $c_1=c_2=0$, then 
$$
\begin{aligned}
\lim _{\langle k_1,k_2 \rangle\rightarrow (0,0)} T_{1,e}=\lim _{\langle k_1,k_2 \rangle\rightarrow (0,0)} T_{2,e}=0.
\end{aligned}$$

\end{proof}

\begin{lemma}\label{lem-2.6}
If $c_i=\infty$ for some $i=1,2$, then $$\lim _{\langle k_1,k_2 \rangle\rightarrow \langle c_1,c_2 \rangle} T_{i,e}+T_{j,e}=2\theta_e.$$

\end{lemma}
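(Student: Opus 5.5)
The cleanest route avoids the arccot asymptotics and uses the Gauss--Bonnet identity (\ref{Gauss-Bonnet formula on the quadrilateral}),
$$Area(\Omega_e)=2\theta_e-T_{i,e}-T_{j,e}.$$
Proving $T_{i,e}+T_{j,e}\to 2\theta_e$ is then the same as proving $Area(\Omega_e)\to 0$ when $k_i\to\infty$, no matter how $k_j$ behaves in $[0,+\infty]$.

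The bound I will use is simple. The bigon $\Omega_e$ is the intersection of the two disks, so it lies inside the disk bounded by $C_i$, whose radius is $r_i=\operatorname{arccot}k_i$. The bigon contains no cone point: it is disjoint from the center of $D_i$, which lies off the bigon for an ideal pattern. Hence, viewed in the standard sphere, the bigon lies in a genuine spherical disk of radius $r_i$, and
$$0< Area(\Omega_e)\le Area(D_{2\pi}(r_i))=4\pi\sin^2\frac{r_i}{2}.$$
As $k_i\to\infty$ we have $r_i\to 0$, so the right-hand side tends to $0$ uniformly in $k_j$. This gives the limit for every value of $c_j$, including $c_j=0$ and $c_j=\infty$.

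As a cross-check I will also compute directly from the explicit formula in the proof of Lemma \ref{lem-2.2}. For $T_{i,e}$, the prefactor $k_i/\sqrt{1+k_i^2}\to1$. The arccot argument is
$$\frac{k_j+k_i\cos\theta_e}{\sin\theta_e\sqrt{1+k_i^2}}=\frac{k_j}{\sin\theta_e\sqrt{1+k_i^2}}+\cot\theta_e\,\frac{k_i}{\sqrt{1+k_i^2}},$$
which tends to $\cot\theta_e+\lambda$, where $\lambda=\lim k_j/(\sin\theta_e k_i)$ along a subsequence, with $\lambda\in[0,\infty]$. So $T_{i,e}\to 2\operatorname{arccot}(\cot\theta_e+\lambda)$. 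For $T_{j,e}$, the arccot argument is
$$\frac{k_i+k_j\cos\theta_e}{\sin\theta_e\sqrt{1+k_j^2}}\to\infty,$$
and the obvious bound is $2\operatorname{arccot}(\cdot)\approx 2\sin\theta_e\sqrt{1+k_j^2}/(k_i+k_j\cos\theta_e)$. Multiplied by $k_j/\sqrt{1+k_j^2}$, this gives $T_{j,e}\approx 2\sin\theta_e k_j/(k_i+k_j\cos\theta_e)$.

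The limits of the two pieces look mismatched. When $\lambda=0$, for example, $T_{i,e}\to2\theta_e$ and $T_{j,e}\to0$, which is consistent. For general $\lambda$, however, the piecewise expansion needs care, because the approximation used for $T_{j,e}$ is not uniform. The case where $k_i$ and $k_j$ both blow up at comparable rates is the hardest one. For that reason I make the area argument the main proof and use this computation only as a sanity check.

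The only step needing justification is that $\Omega_e$ really sits inside a standard spherical disk of radius $r_i$ without any cone point. This follows from the ideal assumption $\theta_e\le\pi/2$, under which bigons avoid singularities, together with $r_i<\pi/2$. Both hold in the setting of Lemma \ref{lem-2.2}, where the two disks are modeled as $D_{2\pi}(r_i)$ in $\mathbb{S}^2$. So the entire argument is the area estimate plus Gauss--Bonnet.
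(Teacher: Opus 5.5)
Your argument is correct, and it is organized differently from the paper's. The paper splits into cases. When $c_j<\infty$, it reads off from the explicit arccot formula that $T_{i,e}\to 2\theta_e$ and $T_{j,e}\to 0$ separately. Only when $c_1=c_2=\infty$ does it use the Gauss--Bonnet identity $Area(\Omega_e)=2\theta_e-T_{i,e}-T_{j,e}$, and there it simply asserts that $Area(\Omega_e)\to 0$.

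You use Gauss--Bonnet in every case and supply the quantitative reason the paper omits: $0<Area(\Omega_e)\le 4\pi\sin^2\frac{r_i}{2}$ with $r_i=\operatorname{arccot}k_i\to 0$. Because this bound does not involve $k_j$, you get the limit uniformly. It covers $k_j\to 0$, $k_j\to\infty$ at any rate relative to $k_i$, and even $k_j$ with no limit.

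In exchange, the paper's case analysis gives finer information about the individual terms: when $k_j$ stays bounded, $T_{i,e}$ alone tends to $2\theta_e$. The statement does not record this, but it is the form needed to make the upper bound on curvatures in $\mathcal{K}_1$ (Step III of the proof of Theorem \ref{Rigidity theorem}) precise. There, boundary edges of a blowing-up set of faces must contribute their full $2\theta_e$ to the blowing-up side.

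Your remark about cone points is unnecessary. In the setting of Lemma \ref{lem-2.2} both disks are round disks in $\mathbb{S}^2$, and $T_{i,e}$, $T_{j,e}$, $Area(\Omega_e)$ depend only on $(k_i,k_j,\theta_e)$.

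Your cross-check, although you do not rely on it, contains an error, and that error is the source of the apparent mismatch. The arccot argument in $T_{j,e}$ tends to $\infty$ only when $k_j=o(k_i)$.

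Suppose instead that $k_j/k_i\to\mu\in(0,\infty)$, so $\mu=\lambda\sin\theta_e$ in your notation. Then that argument tends to $\frac{1+\mu\cos\theta_e}{\mu\sin\theta_e}$. Hence $T_{j,e}\to 2b$ with $b=\operatorname{arccot}\frac{1+\mu\cos\theta_e}{\mu\sin\theta_e}$, while $T_{i,e}\to 2a$ with $a=\operatorname{arccot}\frac{\mu+\cos\theta_e}{\sin\theta_e}$. A short computation gives
\[
\cot(a+b)=\frac{\cot a\cot b-1}{\cot a+\cot b}=\frac{\cos\theta_e\,(\mu^2+2\mu\cos\theta_e+1)}{\sin\theta_e\,(\mu^2+2\mu\cos\theta_e+1)}=\cot\theta_e .
\]
Since $a,b\in(0,\pi/2)$, this forces $a+b=\theta_e$.

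When $\mu=\infty$, the roles swap: $T_{i,e}\to 0$ and $T_{j,e}\to 2\theta_e$. Your approximation $2\sin\theta_e k_j/(k_i+k_j\cos\theta_e)$ would instead give $2\tan\theta_e$, which is wrong. So the direct computation does yield $2\theta_e$ in every regime, with no non-uniformity problem. Your area argument is still the cleaner proof of the stated lemma.
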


\begin{proof}
If $c_i=\infty,c_j<\infty$ for some $i, j=1,2$ and $i\neq j$, then 
$$
\begin{aligned}
\lim _{\langle k_1,k_2 \rangle\rightarrow \langle c_1,c_2 \rangle} T_{i,e}&=\lim_{\langle k_1,k_2 \rangle\rightarrow \langle c_1,c_2 \rangle}\frac{k_i}{\sqrt{1+k_i^2}}\cdot 2 \text{arccot}(\frac{1}{\sin \theta_e}\cdot\frac{k_j+k_i\cos\theta_e}{\sqrt{1+k_i^2}})\\
&=2\text{arccot}(\frac{\cos\theta_e}{\sin\theta_e})\\
&=2\theta_e
\end{aligned}
 $$
and $$\lim _{\langle k_1,k_2 \rangle\rightarrow \langle c_1,c_2 \rangle} T_{j,e} =0.$$
Therefore,

$$
\lim_{\langle k_1,k_2 \rangle\rightarrow\langle c_1,c_2 \rangle}T_{1,e}+T_{2,e}=2\theta_e.
$$

If $c_1=c_2=\infty$, then by the Gauss-Bonnet formula, 

$$
\lim_{\langle k_1,k_2 \rangle\rightarrow(\infty,\infty)}T_{1,e}+T_{2,e}=2\theta_e-\lim_{(k_1.k_2)\rightarrow(\infty,\infty)}Area(\Omega_e)=2\theta_e.
$$

\end{proof}

\section{Proof of Theorem \ref{Rigidity theorem}}
\begin{proof} We first prove that (ii) implies (i).

Let us recapitulate the proof of Proposition 6 in \cite{MR4683863} to see that for any map $\textbf{k}:F\rightarrow(0,+\infty)$, there is a unique ideal spherical circle pattern $\mathcal{D}_{\textbf{k}}$ (up to isometry) underlying $(S, G,\Theta)$ such that $k_f=\textbf{k}(f)$ is the geodesic curvature of the boundary circle of the disk corresponding to a face $f\in F$. 

For two intersecting conical spherical disks $D_1$ and $D_2$, there is a quadrilateral $Q$ bounded by two radii of length $r_1$ and two radii of length $r_2$, with vertices at two intersecting points and the centers of the two disks, which is uniquely determined by $r_1$, $r_2$ and weight $\theta_e$ (as shown in Figure \ref{fig-13}). 
By gluing the intersecting portion, we obtain a spherical metric on the quadrilateral $Q$. Given a vector $\langle r_f\rangle_{f\in F}\in (0,\frac{\pi}{2})^{|F|}$, we first obtain a quadrilateral $Q_{i, j}$ from any two intersecting disks $D_i$ and $D_j$. Then we glue adjacent quadrilaterals along their common edges according to the graph $G$. Through this process, we obtain a conical spherical metric on $S$ underlying $(G,\Theta)$. It follows that there is an ideal circle pattern (unique up to isometry) such that $r_f$ is the radius of the boundary circle of the disk $D_f$ for each $f\in F$. Since $k_f=\cot r_f$, it induces a bijection between $\langle r_f\rangle_{f\in F}$ and $\langle k_f\rangle_{f\in F}$. Therefore, given $\langle k_f\rangle_{f\in F}\in (0,+\infty)^{|F|}$, there is an ideal circle pattern (unique up to isometry) underlying $(S, G,\Theta)$ such that $k_f$ is the geodesic curvature of the boundary circle of $D_f$ for each $f\in F$.

\begin{figure}[H]
{
\centering
\includegraphics[height=4cm]{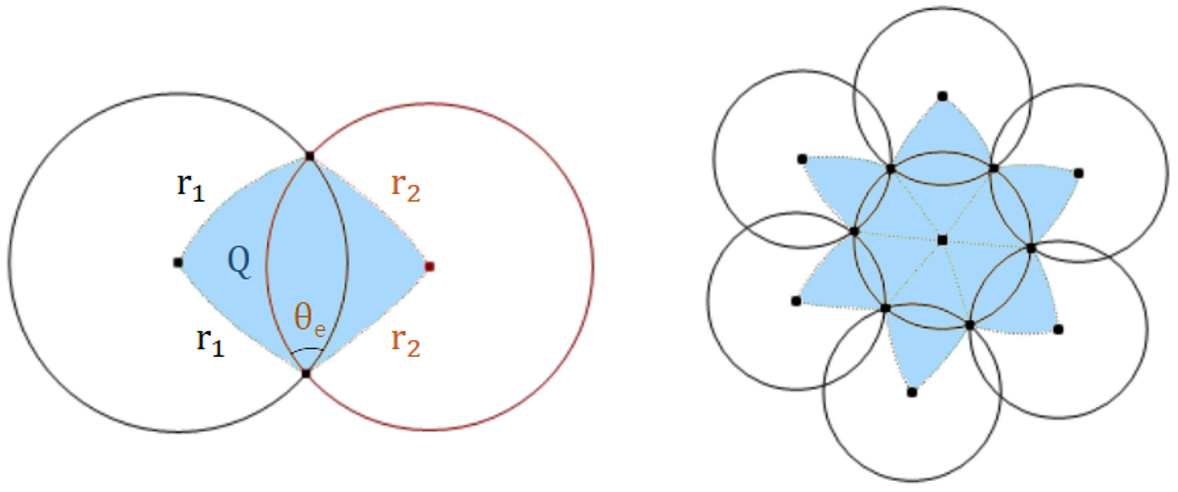} 
\caption{Gluing the quadrilaterals.}
\label{fig-13}

}
\end{figure}
 
Let ${\bf{T}}_{\textbf{k}}:F\rightarrow(0,+\infty)$ represent the vector of the total geodesic curvatures of the circles of the disks of $\mathcal{D}_{\textbf{k}}$. Using the restrain $r\in (0, \frac{\pi}{2})$ and the Gauss-Bonnet formula (\ref{Gauss-Bonnet formula on the quadrilateral}), we know that
$$0<(T_{\textbf{k}})_{f,e}+(T_{\textbf{k}})_{g,e}<2\theta_e$$
whenever two faces $f$ and $g$ share an edge $e$, where $(T_{\textbf{k}})_{f,e}$ and $(T_{\textbf{k}})_{g,e}$ are defined in Lemma \ref{lem-2.2}. It follows that
$$0<(T_{\textbf{k}})_{f,e}<2\theta_e$$ for any $f\in F$ and $e\in E(f)$. Thus, for any nonempty subset $F^{\prime}\subset F$, the total geodesic curvatures satisfy that
$$ 
0<\sum_{f\in F^{\prime}} {\bf{T}}_{\textbf{k}}(f)<\sum_{e\in E(F^{\prime})} 2\Theta(e).
$$
Therefore, ${\bf{T}}_{\textbf{k}}\in \mathcal{T}$. It follows that $\hat{\bf{T}}\in \mathcal{T}$.

Now we apply the Perron method instead of the discrete variation principle to prove that (i) implies (ii). We first outline the proof.

Given a map $\hat{{\bf{T}}}:F\rightarrow (0, \infty )$ in $\mathcal{T}$, we define
$\mathcal{K}_1$ as
\begin{equation}\label{equ-6}
\mathcal{K}_1=\left\{\begin{array}{l|l}\textbf{k}:F\rightarrow(0,+\infty)\end{array}\begin{array}{l}
\mathcal{D}_{\textbf{k}}\text{ is a spherical ideal circle}\\ \text{pattern underlining }(S,G,\Theta)\\\text{ with } {\bf{T}}_{\textbf{k}}(f)\leq\hat{{\bf{T}}}(f),\forall f\in F\end{array}\right \}.
\end{equation}
We call $\mathcal{K}_1$ the subpattern set of $\hat{{\bf{T}}}$. 

We first show that $\mathcal{K}_1$ is not empty. Then we prove the subpattern set $\mathcal{K}_1$ has so-called net properties. Define $\sup \mathcal{K}_1$ to be the map $\boldsymbol{\tau}_1: F\rightarrow (0, \infty)$ by setting 
\begin{equation}\label{equ-8}
\boldsymbol{\tau}_1(f)=\sup_{\textbf{k}^{\prime}\in \mathcal{K}_1}\textbf{k}^{\prime}(f),\forall f_i\in F.
\end{equation}
Thirdly, we prove that $\mathcal{D}_{\boldsymbol\tau_1}$ is the target ideal circle pattern $\mathcal{D}$ such that the total geodesic curvature of the conical circle corresponding to each face $f_i\in F$ is $\hat{T}_i$. Finally, applying the monotonicity for the areas of bigons, we achieved the uniqueness of $\mathcal{D}$ via proof by contradiction. 

Now we provide the details for each step.

\textbf{Step I.} We show that $\mathcal{K}_1\neq \emptyset$.

Let $\mathcal{D}$ be an ideal spherical circle pattern underlying $(S, G,\Theta)$, and let
$\textbf{k}:F\rightarrow(0,+\infty)$ and $\textbf{T}:F\rightarrow(0,+\infty)$ represent
the vector of the geodesic curvatures and the vector of the total geodesic curvatures of the boundary circles of the disks corresponding to the faces $f\in F$. 
Using $\frac{\partial  T_{i,e}}{\partial  k_i}+\frac{\partial  T_{i,e}}{\partial  k_j}>0$ ((c) of Lemma \ref{lem-2.2}) and Lemma \ref{lem-2.5}, one can choose the values of $\textbf{k}$ small enough so that all total geodesic curvatures of the corresponding spherical arcs $T_{i,e},\;e\in E(f_i),\;1\leq i\leq |F| $ are less than or equal to $\frac{m}{d}$, where $m=\min_{f_i\in F}{\hat{T}_i}$ and $d$ is the maximum number of edges ending at the vertex $v$ for all $v\in V$. Then $\textbf{k}\in\mathcal{K}_1$.

\textbf{Step II.} We show that $\mathcal{K}_1$ has the following net property: $\forall\;\textbf{k}^{\prime},\textbf{k}^{\prime\prime}\in\mathcal{K}_1$, define
\begin{equation}\label{equ-10}
\textbf{k}: F\rightarrow (0,+\infty):
    f\mapsto \max\{\textbf{k}^{\prime}(f),\textbf{k}^{\prime\prime}(f)\}.
\end{equation}
Let $\mathcal{D}$ be the ideal spherical circle pattern determined by $\textbf{k}$ and $\textbf{T}_{\textbf{k}}: F\rightarrow (0,+\infty)$ be the corresponding vector of the total geodesic curvatures. Then $\textbf{T}_{\textbf{k}}\in \mathcal{K}_1$.

Let $f\in F$ and assume that it has $n$ vertices on the weighted graph $(G,\Theta)$ (as shown in Figure \ref{fig-7}). Without loss of generality, we may assume that $\textbf{k}^{\prime}(f)\leq \textbf{k}^{\prime\prime}(f)$. Then $\textbf{k}(f)=\textbf{k}^{\prime\prime}(f)$. By the monotonicity property (b) in Lemma \ref{lem-2.2}, the total geodesic curvature of the circle corresponding to $f$ decreases as the geodesic curvatures of neighboring circles increase. Since $\textbf{k}(f_i)\geq\textbf{k}^{\prime\prime}(f_i)$ for the neighboring conical circles $C_i,i=1,\cdots,n$, it follows that ${\bf{T}}_{\textbf{k}}(f)\leq {\bf{T}}_{\textbf{k}^{\prime\prime}}(f)\leq\hat{{\bf{T}}}(f)$. Thus, $\textbf{T}_{\textbf{k}}\in \mathcal{K}_1$.

\textbf{Step III.} We show that $\mathcal{D}_{\boldsymbol{\tau}_1}$ is the desired spherical ideal circle pattern.

By Lemma \ref{lem-2.6}, we know that if $T_{i,e}+T_{j,e}<2\theta_e$, then $k_1,k_2<+\infty$. Therefore, given $\hat{{\bf{T}}}\in \mathcal{T}$,  $\textbf{k}(f)$ is uniformly bounded from above for all $\textbf{k}\in \mathcal{K}_1$ and all $f\in F$, where the bound depends on $\hat{{\bf{T}}}$. It follows that $\boldsymbol{\tau}_1=\sup\mathcal{K}_1: F\rightarrow (0, \infty)$ is well defined. Then there is an ideal spherical circle pattern $\mathcal{D}_{\textbf{$\boldsymbol{\tau}_1$}}$ underlying $(S,G,\Theta)$ with $\textbf{k}=\boldsymbol{\tau}_1$. Clearly, $\textbf{T}_{\boldsymbol{\tau}_1}\in \mathcal{K}_1$ implies that 
$\textbf{T}_{\boldsymbol{\tau}_1}(f)\le \hat{{\bf{T}}}(f)$ for each $f\in F$. We claim that $\textbf{T}_{\boldsymbol{\tau}_1}(f)=\hat{{\bf{T}}}(f)$ for each $f\in F$. Otherwise, suppose that there exists $f_0\in F$ such that ${\bf{T}}_{\boldsymbol{\tau}_1}(f_0)<\hat{\text{T}}_{f_0}=\hat{\textbf{T}}(f_0)$. Then, there exists $\textbf{k}_1$ with $\textbf{k}_1(f_0)>\boldsymbol{\tau}_1(f_0)$ and $\textbf{k}_1(f_0)=\boldsymbol{\tau}_1(f_0)$ for any $f\neq f_0$ such that ${\bf{T}}_{\textbf{k}_1}(f_0)=\hat{T}_{f_0}$ and ${\bf{T}}_{\textbf{k}_1}(f)\le \hat{\bf{T}}(f)$ if $f\neq f_0$. That is, $\textbf{k}_1\in \mathcal{K}_1$, which is a contradiction to the definition of ${\bf{T}}_{\boldsymbol{\tau}_1}$. Thus, $\mathcal{D}_{\boldsymbol{\tau}_1}$ is an ideal spherical pattern with $\textbf{T}_{\boldsymbol{\tau}_1}=\hat{{\bf{T}}}$.

\textbf{Step IV.} We prove the uniqueness of $\mathcal{D}_{\textbf{k}}$ with $\textbf{T}_{\textbf{k}}=\hat{\textbf{T}}$.

\begin{figure}[H]
{
\centering
\includegraphics[height=6cm]{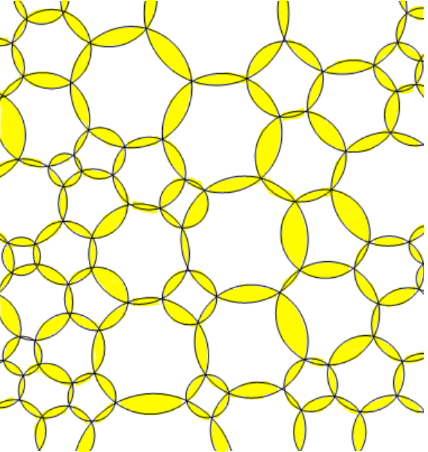} 
\caption{The bigons of the spherical ideal circle pattern.}
\label{Fig12}

}
\end{figure}

Assume that there is another ideal spherical circle pattern $\mathcal{D}_{\boldsymbol{\tau}^{\prime}}$ such that $\textbf{T}_{\boldsymbol{\tau}^{\prime}}=\hat{\textbf{T}}$. Clearly, $\boldsymbol{\tau}^{\prime}\in \mathcal{K}_1$. Thus, $\boldsymbol{\tau}^{\prime}(f)\le \boldsymbol{\tau}_1(f)$ for any $f\in F$. We need to show that $\boldsymbol{\tau}^{\prime}(f)= \boldsymbol{\tau}_1(f)$ for any $f\in F$. Suppose this is not true. Then 
there exists $f_0\in F$ such that $\boldsymbol{\tau}^{\prime}(f_0)<\boldsymbol{\tau}_1(f_0)$. We derive a contradiction by using an area comparison. 

From the Gauss-Bonnet formula, the area of a bigon $\Omega_e$ for a spherical ideal circle pattern $\mathcal{D}_\textbf{k}$ is expressed as 
\begin{equation}\label{equ-12}
    Area(\Omega_e)=2\theta_e-T_{f,e}-T_{g,e},
\end{equation}
where $f,g\in F$ are two faces sharing an edge $e$.
Each $T_{f,e}$, $e\in E(f)$, contributes once and only once to the total geodesic curvature ${\bf{T}}_{\textbf{k}}(f)$ of the conical circle corresponding to a face $f$. Let $\Omega$ be the union of all the bigons for the circle pattern $\mathcal{D}_\textbf{k}$. Thus,
\begin{equation}\label{equ-13}
Area(\Omega)=\sum_{e\in E}Area(\Omega_e)=2\sum_{e\in E}\theta_e-\sum_{f\in F}{\bf{T}}_{\textbf{k}}(f).
\end{equation}
Since $\mathcal{D}_{\boldsymbol{\tau}_1}$ and $\mathcal{D}_{\boldsymbol{\tau}^{\prime}}$ are ideal circle patterns with ${\bf{T}}_{\boldsymbol{\tau}_1}(f)={\bf{T}}_{\boldsymbol{\tau}^{\prime}}(f)=\hat{T}_f$ for any $f\in F$, it follows that
\begin{equation}\label{equ-14}
Area_{\boldsymbol{\tau}_1}(\Omega)=2\sum_{e\in E}\theta_e-\sum_{f\in F}\hat{T}_{f}=Area_{\boldsymbol{\tau}^{\prime}}(\Omega).
\end{equation}
Since $\boldsymbol{\tau}^{\prime}(f_0)<\boldsymbol{\tau}_1(f_0)$ for some $f_0\in F$ and $\boldsymbol{\tau}^{\prime}(f)\le \boldsymbol{\tau}_1(f)$ for any other $f\in F$, it follows from (d) of Lemma \ref{lem-2.2} that 
$Area_{{\boldsymbol{\tau}}^{\prime}}(\Omega)<Area_{\boldsymbol{\tau}_1}(\Omega)$. This contradicts the equation (\ref{equ-14}). Therefore,  $\boldsymbol{\tau}_1=\boldsymbol{\tau}^{\prime}$, which proves the uniqueness of the ideal spherical circle pattern $D_{\textbf{k}}$ with $\textbf{T}_{\textbf{k}}=\hat{\textbf{T}}$.
\end{proof}

In fact, one may provide an alternative proof of Theorem \ref{Rigidity theorem}
by using the superpattern set $\mathcal{K}_2$ of $\hat{{\bf{T}}}$ defined as 
\begin{equation}\label{equ-7}
\mathcal{K}_2=\left\{\begin{array}{l|l}\textbf{k}:F\rightarrow(0,+\infty)\end{array}\begin{array}{l}
\mathcal{D}_{\textbf{k}}\text{ is a spherical ideal circle}\\ \text{pattern underlining }(S,G,\Theta)\\\text{ with } {\bf{T}}_{\textbf{k}}(f)\geq\hat{{\bf{T}}}(f),\forall f\in F\end{array}\right \}.
\end{equation}
However, it takes more effort to prove that $\mathcal{K}_2$ is not empty. We outline a proof as follows. 

We start with $\textbf{k}_1=\langle k_{11},0,0,\cdots,0\rangle$ satisfying ${\bf{T}}_{\textbf{k}_1}(f_1)=\hat{T}_1.$ 

Let $\textbf{k}_2=\langle k_{21},k_{22},0,\cdots,0\rangle\text{, where }k_{21}=k_{11}+k_{22}$, such that ${\bf{T}}_{\textbf{k}_2}(f_2)=\hat{T}_2.$ Since $\frac{\partial  T_{i,e}}{\partial  k_i}+\frac{\partial  T_{i,e}}{\partial  k_j}>0$, ${\bf{T}}_{\textbf{k}_2}(f_1)>\hat{T}_1$.

Let $\textbf{k}_3=\langle k_{31},k_{32},k_{33},0,\cdots,0\rangle\text{, where }k_{31}=k_{21}+k_{33}\text{ and }k_{32}=k_{22}+k_{33}$, such that ${\bf{T}}_{\textbf{k}_3}(f_1)=\hat{T}_1.$ Since $\frac{\partial  T_{i,e}}{\partial  k_i}+\frac{\partial  T_{i,e}}{\partial  k_j}>0$, we obtain ${\bf{T}}_{\textbf{k}_3}(f_2)>\hat{T}_2$ and ${\bf{T}}_{\textbf{k}_3}(f_3)>\hat{T}_3.$

Inductively, we obtain $\textbf{k}_{|F|}=\langle k_{|F|1},k_{|F|2},\cdots,k_{|F||F|}\rangle$ satisfying $$k_{|F|j}=k_{(|F|-1)j}+k_{|F||F|} \;\text{ for any }1\leq j\leq |F|-1$$ and ${\bf{T}}_{\textbf{k}_{|F|}}(f_{|F|})=\hat{T}_{|F|}.$ Since $\frac{\partial  T_{i,e}}{\partial  k_i}+\frac{\partial  T_{i,e}}{\partial  k_j}>0$, it follows that $${\bf{T}}_{\textbf{k}_{|F|}}(f_j)>\hat{T}_j\;\text{ for any } 1\leq j\leq |F|-1.$$ Thus, $\textbf{k}_{|F|}\in \mathcal{K}_2$, which means that $\mathcal{K}_2$ is not empty. 

Similar to $\mathcal{K}_1$, $\mathcal{K}_2$ has the net property. Define $\inf \mathcal{K}_2$ to be the map $\boldsymbol{\tau}_2: F\rightarrow (0, \infty)$ by setting 
\begin{equation}\label{equ-9}
\boldsymbol{\tau}_2(f)=\inf_{\textbf{k}^{\prime}\in \mathcal{K}_2}\textbf{k}^{\prime}(f),\forall f_i\in F.
\end{equation}
We can prove that $\mathcal{D}_{\boldsymbol{\tau_2}}$ is the unique ideal spherical circle pattern $\mathcal{D}_\textbf{k}$ with $\textbf{T}_\textbf{k}=\hat{\textbf{T}}$. It follows that $\boldsymbol{\tau_2}=\boldsymbol{\tau_1}$.

\section{Convergence of Thurston's algorithm}\label{sec-4}
In the proof of Theorem \ref{Rigidity theorem}, we have explained that for any $\textbf{k}^0:F\rightarrow (0,+\infty)$, there is a unique ideal spherical circle pattern $\mathcal{D}^0$ (up to isometry) underlying $(S, G,\Theta)$ such that $k_i^0$ is the geodesic curvature of the boundary circle of the disk $D_i$ corresponding to the face $f_i$ for each $1\leq i \leq |F|$, and, furthermore, the corresponding ${\bf{T}}^{0}\in \mathcal{T}$, where ${\bf{T}}^{0}:F\rightarrow (0,+\infty)$ represents the vector of the total geodesic curvatures of the boundary circles of the disks in the circle pattern $\mathcal{D}^0$. This secures an initial circle pattern for the Thurston algorithm to start. Now we prove the convergence of this algorithm. 

\begin{proof}[Proof of Theorem \ref{Thurston's algorithm}]
Given a vector $\textbf{k}=\langle k_1,k_2,\cdots,k_{|F|}\rangle\in (0, +\infty)^{|F|}$, let $\textbf{u}=\langle \ln k_1, \ln k_2, \cdots, \ln k_{|F|}\rangle$. We briefly write it as $\textbf{u}=\ln \textbf{k}$ or $\textbf{k}=e^{\textbf{u}}$. Then $\textbf{u}\in (-\infty, +\infty)^{|F|}$. We define a total geodesic curvature adjusting map $\Psi:\mathbb{R}^{|F|}\rightarrow\mathbb{R}^{|F|}$ according to the process given in Section \ref{sec-1.5} to obtain the $1^{st}$ approximation $\mathcal{D}^1$. Given an initial vector $\textbf{k}^0=\langle k_1^0,k_2^0,\cdots,k_{|F|}^0\rangle\in (0,+\infty)^{|F|}$, define
$$\Psi(\ln\textbf{k}^0)=\ln\textbf{k}^1=\langle \ln k_1^1,\ln k_2^1,\cdots,\ln k_{|F|}^1\rangle.$$ In brief, we express this map as $\textbf{u}^1=\Psi(\textbf{u}^0)$. 

For each face $f_i\in F$, define $\Psi_i$ as the map that only changes the geodesic curvature $\textbf{k}_i$ of the boundary circle $C_i$ of the disk corresponding to $f_i$ to $\textbf{k}_i'$ such that the total geodesic curvature of the boundary circle $C_i'$ of the disk corresponding to $f_i$ is equal to $\hat{\textbf{T}}(f_i)=\hat{T}_i$. That is, 
$$\Psi_i(\textbf{u})=\Psi_i(\ln \textbf{k})=(\ln k_1, \cdots, \ln k_{i-1}, \ln k_i', \ln k_{i+1}, \cdots, \ln k_{|F|}).$$ Denote by $\psi_i(\textbf{u})=\ln k_i'$. Then
$$\Psi_i(\textbf{u})=(u_1, \cdots, u_{i-1}, \psi_i(\textbf{u}), u_{i+1}, \cdots , u_{|F|}).$$
The iterated map $\Psi$ under Thurston's algorithm is given by
\begin{equation}\label{Iterated map for Thurston algorithm}
\begin{array}{cccl}
\Psi: & (-\infty, +\infty)^{|F|} & \rightarrow & (-\infty, +\infty)^{|F|}\\& \textbf{u} & \mapsto & \Psi(\textbf{u})=(\psi_1(\textbf{u}), \psi_2(\textbf{u}), \cdots, \psi_{|F|}(\textbf{u})).
\end{array}
\end{equation}
Note that $\hat{\textbf{u}}=\ln \hat{\textbf{k}}$ is a fixed point of each $\Psi_i$ and hence a fixed point of $\Psi$. 

Define $B(\hat{\textbf{u}},\rho)$ to be the closed box centered at $\hat{\textbf{u}}$ and of size $\rho >0$; that is,
$$B(\hat{\textbf{u}},\rho)=\{\textbf{u}\in (-\infty,+\infty)^{|F|}: ||\textbf{u}-\hat{\textbf{u}}||_{\infty }=\max_{1\le j\le |F|}|u_j-\hat{u}_j|\le \rho\}.$$
We show that there exists $0<\lambda <1$, depending on $B(\hat{\textbf{u}}, \rho)$, such that for each $1\le i\le |F|$ and for any two points $\textbf{u}, \textbf{u}'\in B(\hat{\textbf{u}}, \rho)$, 
\begin{equation}\label{Local contraction}
|\psi_i(\textbf{u})-\psi_i(\textbf{u}')|\le \lambda ||\textbf{u}-\textbf{u}'||_{\infty }.
\end{equation}

Given any $\textbf{k}=\langle k_1,k_2,\cdots,k_{|F|}\rangle\in (0, +\infty)^{|F|}$, let $\mathcal{D}_{\textbf{k}}$ be the unique ideal spherical circle pattern (up to isometry) underlying $(S, G,\Theta)$ such that $k_{f_i}=\textbf{k}(f_i)$ is the geodesic curvature of the boundary circle $C_i$ of the disk corresponding to the face $f_i\in F$. The total geodesic curvature of $C_i$ is expressed as $T_i(\textbf{k})=T_i(k_1, k_2, \cdots, k_{|F|}),\;i=1,\cdots,|F|$. From the proof of Lemma \ref{lem-2.3}, we can see that $T_i(\textbf{k})$ is a differentiable function of $\textbf{k}$. Thus, $T_i(e^{\textbf{u}})$ is a differentiable function of $\textbf{u}$, which we continue to denote by $T_i(\textbf{u})$.

For each face \( f_i\in F \), we denote by \( j \sim i \) if \( f_j \) is a neighboring face of \( f_i \). From (a) and (b) of Lemma \ref{lem-2.2}, we know that the total geodesic curvature \( T_i(\mathbf{u})=T_i(e^{\textbf{u}}) \) has the following properties:
\begin{enumerate}

   \item  \( T_i(\mathbf{u}) \) is strictly increasing with respect to \( u_i \);  

   \item \( T_i(\mathbf{u}) \) is strictly decreasing with respect to each neighboring curvature \( u_j \) (\( j \sim i \));

   \item \( T_i(\mathbf{u}) \) depends only on \( u_i \) and \( \{u_j\}_{j \sim i} \), and is independent of all other curvatures \( u_l \) with \( l \nsim i \).

\end{enumerate}
Applying the implicit differentiation to $T_{i}(\Psi_i(\textbf{u})) = \hat{T}_{i}$, we obtain
\begin{equation} \label{implicit differentiaion}
\frac{\partial \psi_i(\textbf{u})}{\partial u_j} =-\frac{\frac{\partial T_{i}}{\partial u_j}(\Psi _i(\textbf{u}))}{\frac{\partial T_{i}}{\partial u_i}(\Psi_i(\textbf{u}))}, \quad j\neq i,\quad\text{and} \quad\frac{\partial \psi_i(\textbf{u})}{\partial u_i}=0.
\end{equation}
Given $f_i\in F$, the Gauss-Bonnet formula in (\ref{equ-12}) implies that 
$$
\sum_{e\in E(f_i)}Area(\Omega_e)=2\sum_{e\in E(f_i)}\theta_e-T_i-\sum_{j\sim i}T_{j,e}.
$$
By taking the partial derivatives of both sides of the above equation with respect to $u_i$, we obtain
$$
\begin{aligned}
-\frac{\partial\sum_{e\in E(f_i)} Area(\Omega_e)}{\partial u_i}&=\frac{\partial T_i}{\partial u_i}+\sum_{j\sim i}\frac{\partial T_{j,e}}{\partial u_i}
=\frac{\partial T_i}{\partial u_i}+\sum_{j\sim i}\frac{\partial T_{i,e}}{\partial u_j}\\ &=\frac{\partial T_i}{\partial u_i}+\sum_{j\sim i}\frac{\partial T_{i}}{\partial u_j}
=\frac{\partial T_i}{\partial u_i}+\sum_{j\neq i}\frac{\partial T_{i}}{\partial u_j}\\
\end{aligned}
$$ since $\frac{\partial T_{j,e}}{\partial u_i}=0$ if $f_i$ and $f_j$ don't share the edge $e$, and $\frac{\partial T_{i}}{\partial u_j}=0$ if $j\neq i$ and $j\nsim i$.

Since each $\Psi _i$ is continuous on the closed box $B(\hat{\textbf{u}},\rho)$, there exists a large enough $\rho'>\rho$ such that $\Psi_i(B(\hat{\textbf{u}},\rho))\subset B(\hat{\textbf{u}},\rho')$ for all $1\le i\le |F|$. Applying (d) of Lemma \ref{lem-2.2}, we know that there is a positive real number $\epsilon$, depending on $B(\hat{\textbf{u}},\rho')$, such that for any $\textbf{u}\in B(\hat{\textbf{u}},\rho')$,
$$\frac{\partial T_i}{\partial u_i}+\sum_{j\neq i}\frac{\partial T_i}{\partial u_j}=-\frac{\partial\sum_{e\in E(f_i)} Area(\Omega_e)}{\partial u_i}\ge \epsilon,\quad 1\leq i\leq |F|.$$
In the meantime, there is $M>0$, which also depends on $B(\hat{\textbf{u}},\rho')$, such that
$$
\left|\frac{\partial T_{i}}{\partial u_i}\right|\le M, \; \forall \;\textbf{u}\in B(\hat{\textbf{u}},\rho') \text{ and }\forall \;1\le i\le |F|.
$$
By (a) and (b) of Lemma \ref{lem-2.2}, we know that $\frac{\partial T_{i}}{\partial u_i}>0$ and $\frac{\partial T_i}{\partial u_j}\le 0$ for any $j\neq i$.
It follows that 
$$
\lambda 
=\sup_{\textbf{u}\in B(\hat{\textbf{u}},\rho')}\left\{\frac{\sum_{j\neq i}\left|\frac{\partial T_{i}}{\partial u_j}\right|}{\left|\frac{\partial T_{i}}{\partial u_i}\right|}\right\}
\le \sup_{\textbf{u}\in B(\hat{\textbf{u}},\rho')}\left\{\frac{\left|\frac{\partial T_{i}}{\partial u_i}\right|-\epsilon}{\left|\frac{\partial T_{i}}{\partial u_i}\right|}\right\}\\
\le 1-\frac{\epsilon}{M}
< 1.
$$
Using the expressions in (\ref{implicit differentiaion}), we obtain 
\begin{equation}\label{contraction constant}\sup_{\textbf{u}\in  B(\hat{\textbf{u}},\rho)} \sum _{j}|\frac{\partial \psi_i}{\partial u_j}(\textbf{u})|\le \lambda <1.
\end{equation}

Given $\textbf{u},\textbf{u}^{\prime}\in B(\hat{\textbf{u}},\rho)$, it follows from the mean value theorem for a function of several variables that
\begin{equation}\label{difference of phi_i}
\psi_i(\textbf{u})-\psi_i\left(\textbf{u}^{\prime}\right)=\int_0^1 \nabla \psi_i\left(\textbf{u}+t\left(\textbf{u}-\textbf{u}^{\prime}\right)\right) \cdot\left(\textbf{u}-\textbf{u}^{\prime}\right) d t. 
\end{equation}
Thus,
\begin{equation}\label{Esitimate on the difference of phi_i}
\begin{aligned}
 \left|\psi_i(\textbf{u})-\psi_i\left(\textbf{u}^{\prime}\right)\right| \leq &\int_0^1\left|\nabla \psi_i\left(\textbf{u}+t\left(\textbf{u}-\textbf{u}^{\prime}\right)\right)\left(\textbf{u}-\textbf{u}^{\prime}\right)\right| d t \\
\leq & \int_{0}^1\left( \sum_{j}\left|\frac{\partial \psi_i}{\partial u_j}\left( \textbf{u}+t\left(\textbf{u}-\textbf{u}^{\prime}\right)\right)\right| \cdot\left|u_j-u_j^{\prime}\right| \right)d t\\
= & (\sup _{0\leq t\leq 1} \{\sum_{j}|\frac{\partial \psi_i}{\partial u_j}(\textbf{u}+t\left(\textbf{u}-\textbf{u}^{\prime} ))\right| \})  \cdot\left\|\textbf{u}-\textbf{u}^{\prime}\right\|_{\infty}\\<&\lambda \left\|\textbf{u}-\textbf{u}^{\prime}\right\|_{\infty}.
\end{aligned}
\end{equation} 
Since 
$\left|\psi_i(\textbf{u})-\psi_i\left(\textbf{u}^{\prime}\right)\right| \leq \lambda \left\|\textbf{u}-\textbf{u}^{\prime}\right\|_{\infty}$ holds for each $1\le i\le |F|$, it follows that $$||\Psi(\textbf{u})-\Psi(\textbf{u}')||_{\infty }<\lambda ||\textbf{u}-\textbf{u}'||_{\infty }.$$ Hence $\Psi$ maps $B(\hat{\textbf{u}}$ into itself and $\Psi: B(\hat{\textbf{u}},\rho)\rightarrow B(\hat{\textbf{u}},\rho)$ is a contraction. Therefore, for each $\textbf{u}\in B(\hat{\textbf{u}},\rho)$, $\Psi^m(\textbf{u})$ converges to the unique fixed point $\hat{\textbf{u}}$ as $m\rightarrow +\infty$.
We complete a proof for the convergence of the Thurston algorithm. 

It is easy to see that $\Psi$ maps the set $\mathcal{K}_1$ (defined by (\ref{equ-6})) into itself. Thus, for each $1\le i \le |F|$, $(\Psi^m(\textbf{u}))_i$ increases to $\hat{u}_i$ as $m\rightarrow +\infty$.
The map $\Psi$ preserves the set $\mathcal{K}_2$ (defined by (\ref{equ-7})) also, which implies that for each $1\le i \le |F|$, $(\Psi^m(\textbf{u}))_j$ decreases to $\hat{u}_i$ as $m\rightarrow +\infty$.  
\end{proof}

\bigskip
{\bf Acknowledgments:} Jun Hu is supported by a fellowship leave award of City University of New York for the adademic year 2025-26. Yi Qi is supported by NSFC (Grant Number 12271017). Yu Sun is supported by NSFC (Grant Number 12501097) and a university level natural science
foundation of Nanjing Institute of Technology (Grant number  3534113223051). The authors thank Guangming Hu for some helpful discussions.

\bibliographystyle{amsplain}
\bibliography{refs.bib}

\providecommand{\bysame}{\leavevmode\hbox to3em{\hrulefill}\thinspace}
\providecommand{\MR}{\relax\ifhmode\unskip\space\fi MR }
\providecommand{\MRhref}[2]{%
  \href{http://www.ams.org/mathscinet-getitem?mr=#1}{#2}
}
\providecommand{\href}[2]{#2}
\begin{thebibliography}{1}

\bibitem{MR2022715}
Alexander~I. Bobenko and Boris~A. Springborn, \emph{Variational principles for circle patterns and {K}oebe's theorem}, Trans. Amer. Math. Soc. \textbf{356} (2004), no.~2, 659--689. \MR{2022715}

\bibitem{MR1106755}
Yves Colin~de Verdi\`ere, \emph{Un principe variationnel pour les empilements de cercles}, Invent. Math. \textbf{104} (1991), no.~3, 655--669. \MR{1106755}

\bibitem{MR4683863}
Xin Nie, \emph{On circle patterns and spherical conical metrics}, Proc. Amer. Math. Soc. \textbf{152} (2024), no.~2, 843--853. \MR{4683863}

\bibitem{Thurston1979TheGA}
William~P. Thurston, \emph{The geometry and topology of 3-manifolds}, Princeton Lecture Notes, 1976.

\bibitem{Troyanov1991}
Marc Troyanov, \emph{Prescribing curvature on compact surfaces with conical singularities}, Trans. Amer. Math. Soc. \textbf{324} (1991), no.~2, 793--821. \MR{1005085}

\end{thebibliography}

\newpage
\begin{center}
\rule{\textwidth}{0.4pt}\\[0.3cm]
{\large \textbf{Affiliation and Contact Information}}\\[0.5cm]
\end{center}

\noindent
\textbf{Lishan Li} \\
School of Mathematical Sciences, Beihang University \\
Beijing, 102206, P. R. China \\
Email: \href{mailto: lishan-li@buaa.edu.cn}{lishan-li@buaa.edu.cn}

\vspace{0.5cm}

\noindent
\textbf{Jun Hu} \\
Department of Mathematics, Brooklyn College of CUNY\\
Brooklyn, NY 11210, USA\\
Ph.D. Program in Mathematics, Graduate Center of CUNY\\
365 Fifth Avenue, New York, NY 10016, USA \\
Email: \href{mailto:junhu@Brooklyn.cuny.edu}{junhu@Brooklyn.cuny.edu} or \href{mailto:JHu1@gc.cuny.edu}{JHu1@gc.cuny.edu}

\vspace{0.5cm}

\noindent
\textbf{Yi Qi} \\
School of Mathematical Sciences, Beihang University \\
Beijing, 102206, P. R. China \\
Email: \href{mailto:yiqi@buaa.edu.cn}{yiqi@buaa.edu.cn}

\vspace{0.5cm}

\noindent
\textbf{Yu Sun} \\
School of Mathematics and Physics, Nanjing institute of technology\\
Nanjing, 211100, P.R. China\\
Email: \href{mailto:yusun15185105160@163.com}{yusun15185105160@163.com}

\end{document}